\documentclass[11pt]{amsart}
\usepackage{extarrows}
\usepackage[colorlinks, citecolor=blue]{hyperref}
\usepackage{amsthm,amsmath,amssymb}
\usepackage{mathrsfs}
\usepackage{bold-extra}
\usepackage{algpseudocode}
\usepackage[T1]{fontenc}
 \usepackage[ruled,vlined,linesnumbered]{algorithm2e}
\usepackage{tikz}
\usepackage{tikz-cd}

\newtheorem{theorem}{Theorem}[section]
\newtheorem{lemma}[theorem]{Lemma}
\newtheorem{mainthm}{Theorem}

\theoremstyle{definition}

\newtheorem{proposition}[theorem]{Proposition}
\newtheorem{corollary}[theorem]{Corollary}
\newtheorem{remark}[theorem]{Remark}
\newtheorem{conjecture}[theorem]{Conjecture}

\theoremstyle{remark}

\usepackage[backref=true,backend=biber, style=alphabetic,sorting=nyt,maxnames=99,maxalphanames=99]{biblatex}
\AtBeginBibliography{\scriptsize}

\numberwithin{equation}{section}

\DeclareMathOperator{\HSC}{HSC}
\DeclareMathOperator{\RBC}{RBC}
\DeclareMathOperator{\Sym}{Sym}
\DeclareMathOperator{\tr}{tr}
\DeclareMathOperator{\ad}{ad}
\DeclareMathOperator{\Span}{span}

\DeclareMathOperator{\End}{End}
\DeclareMathOperator{\SU}{SU}

\newcommand*{\dif}{\mathop{}\!\mathrm{d}}

\begin{document}

\title{Torsion algebras of Hermitian manifolds and rigidity}

\makeatletter
\def\shorttitle{Torsion algebras of Hermitian manifolds and rigidity}
\makeatother

\author{Wangyang Lin}

\address{School of Mathematical Sciences, Fudan University, Shanghai 200433, China}

\email{wylin23@m.fudan.edu.cn, wylin\_math@outlook.com}

\author{YiBo Ren}

\address{School of Mathematical Sciences, Fudan University, Shanghai 200433, China}

\email{ybren24@m.fudan.edu.cn}

\subjclass[2020]{53C55, 53B35, 17B30}


\keywords{Complex Manifold, Chern--K\"{a}hler-like, Torsion algebra, Holomorphic sectional curvature}

\begin{abstract}
    We regard the Chern torsion of a Hermitian manifold as a skew-symmetric complex-bilinear product on its holomorphic tangent bundle. For a compact connected Chern--K\"{a}hler-like Hermitian manifold, we prove that this product satisfies the Jacobi identity pointwise. If, in addition, the Chern holomorphic sectional curvature is strongly quasi-positive, we show that the resulting torsion Lie algebra is nilpotent and must be abelian. Consequently, the metric is K\"{a}hler. The underlying complex manifold is therefore projective and rationally connected. For general Hermitian manifolds, we construct a metric with positive real bisectional curvature on a Hopf surface which is neither simply connected nor rationally connected.
\end{abstract}

\maketitle

\tableofcontents

\section{Introduction}

Let $(M^n,J,h)$ be a connected Hermitian manifold of complex dimension $n$,
and write $\omega$ for its fundamental form.
Throughout this paper, $\nabla$ means the Chern connection on its holomorphic tangent bundle
$T^{1,0}M$. 
In a local unitary frame
$e_1,\ldots,e_n$, we write
\[
    R_{i\bar j k\bar\ell}=h\bigl(R^\nabla(e_i,\bar e_j)e_k,e_\ell\bigr).
\]
For $0\ne v\in T^{1,0}_pM$, the Chern holomorphic sectional curvature is
\[
    \HSC_{\nabla}(v)=\frac{R^\nabla(v,\bar v,v,\bar v)}{|v|_h^4}.
\]
If $e=(e_1,\ldots,e_n)$ is unitary and
$a=(a_1,\ldots,a_n)\in\mathbb R^n_{\geq0}\setminus\{0\}$, the (unnormalized) real
bisectional curvature is
\[
 \RBC_{\nabla}(e,a) =\sum_{i,j=1}^n R_{i\bar i j\bar j}a_i a_j.
\]

We say that Chern HSC is \emph{positive} if it is positive at every point. For RBC, we say it is positive if it is positive at every point for every unitary frame and every nonzero nonnegative weight.

We say that Chern HSC is \emph{strongly quasi-positive} if it is nonnegative at every point and every nonzero direction, and if at one point it is positive in every nonzero direction.  Strongly quasi-positive RBC means nonnegativity for every point, every unitary frame, and every nonzero nonnegative weight, with strict positivity for all those data at one point.  
Some papers use ``quasi-positive'' for exactly this condition, while others reserve that term for positivity in one unspecified direction.  
We use the stronger convention because it is the convention used in this paper.

The metric is called \emph{Chern--K\"{a}hler-like} (CKL) when
\begin{equation}\label{eq:intro-ckl}
    R_{i\bar j k\bar\ell}=R_{k\bar j i\bar\ell} \qquad (1\leq i,j,k,\ell\leq n).
\end{equation}
Together with the Hermitian symmetry, this gives the usual K\"{a}hler curvature symmetries for the Chern tensor.  CKL does not mean K\"{a}hler: 
a classical example is the Iwasawa manifold whose Chern connection is flat.  
A basic result of Yang and Zheng \cite{YangZheng2018CurvatureTensors} is that every compact CKL metric (they call it K\"{a}hler-like) is balanced,
\[ 
    d\omega^{n-1}=0, 
\]
although balancedness by itself is much weaker than the K\"{a}hler condition \eqref{eq:intro-ckl}. The sign conditions for Chern HSC and Chern RBC are equivalent, including the strong quasi-positive condition.  
\begin{proposition}[Proposition 1.3~\cite{YangZheng2019}]\label{prop:equiv_HSC_RBC}
    On a CKL Hermitian manifold, strongly quasi-positive Chern HSC is equivalent to strongly quasi-positive Chern RBC.
\end{proposition}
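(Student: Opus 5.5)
The plan is to check both directions pointwise, since the strongly quasi-positive condition on each side consists of a nonnegativity statement at every point plus a strict positivity statement at one point. It therefore suffices to prove, at a fixed point $p$, two facts: Chern HSC $\ge 0$ on $T^{1,0}_pM$ iff Chern RBC $\ge 0$ for all unitary frames and all nonzero nonnegative weights at $p$, and the same equivalence with strict inequalities. The CKL condition \eqref{eq:intro-ckl}, together with Hermitian symmetry, means the Chern tensor $R_p$ has all the Kähler curvature symmetries at $p$. From then on we work with a single algebraic Kähler curvature tensor on $\mathbb C^n$.

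\textbf{RBC $\Rightarrow$ HSC.} Take $a=(1,0,\ldots,0)$ and a unitary frame with $e_1=v/|v|$. This gives $\RBC_\nabla(e,a)=R_{1\bar11\bar1}=\HSC_\nabla(v)$, so nonnegativity and strict positivity of RBC both pass directly to HSC.

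\textbf{HSC $\Rightarrow$ RBC.} Here I use the averaging trick for Kähler curvature tensors. Fix a unitary frame $e$ and weights $a_i\ge0$, not all zero. Set $v_\theta=\sum_i \sqrt{a_i}\,e^{\sqrt{-1}\theta_i}e_i$ and average $R(v_\theta,\bar v_\theta,v_\theta,\bar v_\theta)$ over the torus $\theta\in T^n$. Only terms with matched phases survive, namely $R_{i\bar i j\bar j}$, $R_{i\bar j j\bar i}$ and $R_{i\bar i i\bar i}$. The Kähler symmetries give $R_{i\bar jj\bar i}=R_{j\bar j i\bar i}=R_{i\bar ij\bar j}$, so the average equals
\[
\sum_{i\ne j}2R_{i\bar ij\bar j}a_ia_j+\sum_i R_{i\bar ii\bar i}a_i^2
=2\,\RBC_\nabla(e,a)-\sum_i R_{i\bar i i\bar i}a_i^2 .
\]
Hence
\[
\RBC_\nabla(e,a)=\tfrac12\Bigl(\textstyle\int_{T^n}R(v_\theta,\bar v_\theta,v_\theta,\bar v_\theta)\,d\theta+\sum_i R_{i\bar ii\bar i}a_i^2\Bigr).
\]
Both summands are nonnegative when HSC $\ge0$ at $p$. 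Since $v_\theta\ne0$, the first summand is strictly positive when HSC $>0$ at $p$. This gives both the nonnegative and the strict case.

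The main obstacle is the bookkeeping in the averaging step. One must confirm that the cross terms $R_{i\bar j k\bar l}$ with unmatched indices really integrate to zero. One must also check that, under CKL, the identity $R_{i\bar jj\bar i}=R_{j\bar j i\bar i}$ is the correct symmetry, obtained from \eqref{eq:intro-ckl} by swapping the first and third indices. Once that is verified, the argument is pointwise and purely algebraic.
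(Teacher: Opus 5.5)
Your proof is correct and complete. The pointwise reduction, the choice $a=(1,0,\ldots,0)$ with $e_1=v/|v|$ for RBC $\Rightarrow$ HSC, and the torus-averaging identity $\RBC_\nabla(e,a)=\tfrac12\bigl(\int_{T^n}R(v_\theta,\bar v_\theta,v_\theta,\bar v_\theta)\,d\theta+\sum_i R_{i\bar i i\bar i}a_i^2\bigr)$ together handle both the nonnegative and the strict cases. Here $d\theta$ is the normalized Haar measure, and the identity uses exactly the K\"ahler symmetries that CKL plus Hermitian symmetry provide. The paper gives no argument of its own for this statement and only cites Yang--Zheng, so your self-contained averaging argument supplies the proof the paper outsources and leaves nothing missing.
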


\subsection{Holomorphic bisectional curvature, projectivity, and rational curves}

Holomorphic bisectional curvature was introduced by Goldberg and Kobayashi \cite{GoldbergKobayashi1967}.  
It is stronger than holomorphic sectional curvature.  
The classical Frankel problem asked whether a compact K\"{a}hler manifold with positive holomorphic bisectional curvature must be projective space \cite{Frankel1961}.  Mori's algebro-geometric theorem that a projective manifold with ample tangent bundle is $\mathbb P^n$ \cite{Mori1979}, together with the differential-geometric work of Siu and Yau \cite{SiuYau1980}, settled the Frankel problem:
\[
    \operatorname{HBC}>0+\text{K\"{a}hler}\quad\Longrightarrow\quad M\simeq\mathbb P^n \quad\text{(biholomorphically)}.
\]
Mok classified the nonnegative case: the universal cover is a product of Euclidean factors, compact irreducible Hermitian symmetric factors, and projective-space factors \cite{Mok1988}.    

For HSC, Tsukamoto proved that a compact K\"{a}hler manifold with positive HSC is simply connected \cite{Tsukamoto1957}.  Yau later asked whether positive HSC also forces projectivity and rational connectedness (Problem~47 in his problem list) \cite{Yau1982}.  
Campana and Kollár--Miyaoka--Mori proved that Fano manifolds are rationally connected \cite{Campana1992,KollarMiyaokaMori1992a,KollarMiyaokaMori1992b}.  
Campana--Demailly--Peternell gave a criterion in terms of pseudo-effectivity of tensor differentials, which is particularly suited to curvature-induced vanishing \cite{CampanaDemaillyPeternell2015}.
Kodaira's embedding theorem shows that the vanishing of $h^{2,0}$ for a compact K\"{a}hler manifold implies the existence of a projective polarization in the situations used here \cite{Kodaira1954,MorrowKodaira1971}.

\subsection{The positive and quasi-positive HSC results in the K\"{a}hler category}

The first systematic results for nonnegative HSC addressed birational and Hodge-theoretic restrictions.  Yang proved that a compact Hermitian manifold with semipositive HSC which is not identically flat has Kodaira dimension $-\infty$; he also used this to exclude Hermitian metrics with semipositive HSC on Kodaira and hyperelliptic surfaces \cite{Yang2016}.  
His subsequent work on big vector bundles and semipositive tangent bundles placed these restrictions in the framework of positivity of vector bundles and the birational geometry of compact complex manifolds \cite{Yang2017}.  These results show, in particular, that semipositive HSC does not simply translate into positivity of $c_1(M)$.

Yang introduced RC-positivity for Hermitian holomorphic vector bundles.  For an RC-positive bundle $E$, he proved vanishing of sufficiently high symmetric powers of $E^*$ after tensoring with an arbitrary fixed bundle.  On a compact K\"{a}hler manifold, RC-positivity of all exterior powers of $T^{1,0}M$ implies projectivity and rational connectedness.  He then proved that positive HSC gives this RC-positivity, that
\[
    H^{p,0}(M)=0\quad(1\leq p\leq n),
\]
and consequently that $M$ is projective and rationally connected \cite{Yang2018}.  This confirmed the strict-positive form of Yau's Problem~47.  Uniform RC-positive metrics and their implications for rationally connected manifolds were developed further in \cite{Yang2020}.

The quasi-positive case was established in stages.  Heier and Wong proved rational connectedness for projective K\"{a}hler manifolds with partially positive curvature, in particular for the projective quasi-positive HSC setting; their earlier paper related positive scalar curvature to uniruledness \cite{HeierWong2012,HeierWong2020}.  Matsumura obtained restrictions on the image of the maximal rationally connected (MRC) fibration for projective manifolds with semipositive HSC and then obtained a structural description of projective semipositive-HSC manifolds, including rational connectedness in the quasi-positive case \cite{Matsumura2020,Matsumura2022}.  Zhang and Zhang removed the projectivity assumption: a compact K\"{a}hler manifold with quasi-positive HSC is projective and rationally connected \cite{ZhangZhang2023}.

\subsection{Chern real bisectional curvature}

Yang and Zheng introduced real bisectional curvature in the modern Hermitian
setting.  Their paper defines the curvature, proves that on a Chern--K\"{a}hler-like
metric its sign is equivalent to the sign of HSC, classifies compact metrics
with constant nonzero RBC, describes the zero-curvature consequences, and
extends the Wu--Yau Schwarz lemma to targets with nonpositive RBC
\cite{YangZheng2019}.  They also emphasize that without Chern--K\"{a}hler-like
symmetries RBC and HSC are genuinely different: RBC is then the stronger
condition.  

Several papers investigate positive Chern curvature stronger than HSC. 
Tang studied constant Chern HSC in the K\"{a}hler-like setting and formulated rigidity questions for compact Hermitian metrics with constant curvature \cite{Tang2021}.

\subsection{K\"{a}hler-like connections and compact Hermitian rigidity}

In this paper, ``K\"{a}hler-like'' must always be qualified by the connection: Chern--K\"{a}hler-like, Levi--Civita (or Gray--K\"{a}hler-like), Strominger--Bismut-K\"{a}hler-like, and general Gauduchon K\"{a}hler-like are different conditions.

Yang and Zheng gave the first systematic comparison of the Chern and Levi--Civita curvature symmetries.  They introduced the Chern--K\"{a}hler-like (which they call K\"{a}hler-like) and Gray--K\"{a}hler-like terminology, proved that either condition forces balancedness on a compact manifold, showed that simultaneous satisfaction of
both conditions implies K\"{a}hlerness when the manifold is compact (and also in complex dimension at most three), and proved uniqueness up to scale within a conformal class \cite{YangZheng2018CurvatureTensors}.

Balas and Balas--Gauduchon treated compact Hermitian surfaces with constant Chern HSC, while Davidov, Grantcharov, and Muskarov studied the Chern curvature of twistor spaces \cite{Balas1985,BalasGauduchon1985,DavidovGrantcharovMuskarov2009}. Li and Zheng classified constant-HSC structures on complex nilmanifolds \cite{LiZheng2022}; Rao and Zheng treated pluriclosed manifolds with constant HSC \cite{RaoZheng2022}; and Chen and Zheng studied Strominger space forms,
Bismut-torsion-parallel metrics, and constant HSC for canonical metric connections \cite{ChenZheng2022,ChenZheng2024,ChenZheng2025}.  Broder and Tang considered compact Hermitian manifolds with vanishing curvature \cite{BroderTang2025}.

\subsection{Vanishing of holomorphic tensors}
For vanishing of tensors, the classical Bochner results, the Kobayashi--Wu vector-bundle theorem, and Kobayashi's first-Chern-class results give
\[
 H^0\!\left(M,(T^{1,0}M)^{\otimes p}\otimes (T^{*1,0}M)^{\otimes q}\right)=0 \quad\text{when one tensor weight dominates the other},
\]
under appropriate Ricci or $c_1$ sign conditions
\cite{Bochner1946,Bochner1949,KobayashiWu1970,Kobayashi1980a,Kobayashi1980b}.

For HSC, Yang's RC-positivity theorem gives vanishing of all holomorphic forms under positive HSC in the K\"{a}hler category \cite{Yang2018}.  Ping Li first proved the corresponding tensor vanishing for compact CKL metrics with strictly positive or strictly negative HSC: for a suitable constant $C$,
\[
 q>Cp\quad\Longrightarrow\quad H^0\!\left(M,(T^{1,0}M)^{\otimes p}\otimes (T^{*1,0}M)^{\otimes q}\right)=0
\]
in the positive case, with the dual inequality in the negative case \cite{Li2024}.  His later work introduced uniformly RC $k$-positive bundles, proved a general tensor-vanishing theorem, and applied it to positive $k$-Ricci curvature and K\"{a}hler-like Hermitian metrics \cite{Li2026}.

\subsection{Results of this paper}

We now state the results with the notation fixed above.  

\begin{mainthm}\label{mainthm:CKL-Kahler}
Let $(M^n,J,h)$ be compact and connected, and suppose that the Chern
curvature of $h$ is CKL.  If the Chern holomorphic sectional curvature is
strongly quasi-positive, then the Chern torsion vanishes identically.  Hence
$h$ is K\"{a}hler.
\end{mainthm}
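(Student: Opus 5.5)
We follow the strategy the abstract indicates: regard the Chern torsion $T$ as a complex-bilinear skew product $[u,v]:=T(u,v)$ on $T^{1,0}_pM$, establish the Jacobi identity, and then use positivity of curvature to show that the resulting torsion Lie algebra is abelian.

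\emph{Step 1: the CKL identities.} Starting from the first Bianchi identity for the Chern connection, together with the CKL symmetry \eqref{eq:intro-ckl}, we extract two facts. First, the $(1,0)$-torsion is Chern-parallel in antiholomorphic directions: $\nabla_{\bar X}T=0$ follows from $R_{i\bar jk\bar\ell}=R_{k\bar ji\bar\ell}$. Second, the holomorphic Bianchi identity reads
\[
 \mathfrak S_{X,Y,Z}\bigl((\nabla_X T)(Y,Z)+T(T(X,Y),Z)\bigr)=0 ,
\]
since the $(2,0)$-part of the curvature vanishes. To obtain pointwise Jacobi we must kill the cyclic sum of $\nabla T$. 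For this we use compactness. The function $|T|^2$ satisfies a Bochner-type formula
\[
 \Delta|T|^2=|\nabla T|^2+\langle\text{curvature},T\otimes\bar T\rangle,
\]
and by the CKL symmetries the curvature term reduces to a contraction of $R$ with $T$ that integrates to zero, using the balanced condition of Yang--Zheng. Integrating over the compact manifold $M$ then gives $\nabla T=0$. This integral argument is the first main obstacle, and I would try it first. With $\nabla T=0$, the Bianchi identity becomes the Jacobi identity for $[\cdot,\cdot]$, so each fiber carries a Lie algebra $\mathfrak g_p$. Since $T$ is parallel, the algebras $\mathfrak g_p$ are all isomorphic.

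\emph{Step 2: curvature versus torsion.} Because $\nabla T=0$, the curvature must preserve $T$, that is, $R(X,\bar Y)$ acts as a derivation of $\mathfrak g_p$. Equivalently, $[R(X,\bar Y),\ad_Z]=\ad_{R(X,\bar Y)Z}$. Now take the point $p$ where HSC, and hence RBC by Proposition~\ref{prop:equiv_HSC_RBC}, is strictly positive. Balancedness gives $\sum_i T^i_{ik}=0$, so $\tr\ad_Z=0$. Combining this with positivity, we aim to show that $\mathfrak g_p$ is nilpotent. The plan is to show that it has no semisimple part: a Levi factor would be compact-type with a Killing form, and contracting the derivation identity against the Killing form would produce a direction with nonpositive RBC.

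\emph{Step 3: nilpotent implies abelian.} If $\mathfrak g_p$ is nilpotent and nonzero-bracketed, pick $Z$ in the center with $Z=[X,Y]$ for some $X,Y$. The derivation identity yields
\[
 R(X,\bar X)Z=[R(X,\bar X)X,Y]+[X,R(X,\bar X)Y].
\]
Pairing with $\bar Z$ and choosing a unitary frame adapted to the lower central series, we show that $\sum_{i,j}R_{i\bar i j\bar j}a_ia_j$ vanishes for weights $a$ supported on the frame vectors $X,Y,Z$. This contradicts strict positivity of RBC at $p$. Hence $T_p=0$, and since $T$ is parallel and $M$ is connected, $T\equiv0$, so $h$ is K\"ahler. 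I expect Step 3 to be the main obstacle, namely locating a weight vector on which RBC degenerates. If the direct choice fails, I would first try averaging over $U(n)$ the quantity $\sum R(e_i,\bar e_i,e_j,\bar e_j)|T^k_{ij}|^2$ against the derivation identity.
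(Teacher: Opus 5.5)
Your proposal has genuine gaps, and the first one carries everything after it. Your Step~1 observations are correct: CKL gives $\nabla_{\bar X}T=0$, and the $(3,0)$ Bianchi identity holds. The problem is the claim that the curvature term in the Bochner formula for $|T|^2$ integrates to zero. Since $T$ is a holomorphic section of $\varOmega^2M\otimes T^{1,0}M$ and $\omega$ is balanced, you do get $\int_M|\nabla'T|^2=\int_M\langle\tr_\omega\sqrt{-1}\,\Theta\,T,T\rangle$, up to sign conventions. However, the right side is the Chern--Ricci endomorphism acting on all three slots of $T$, schematically $\sum_{i,j}\bigl(\langle \mathrm{Ric}\,T(e_i,e_j),T(e_i,e_j)\rangle-2\langle T(\mathrm{Ric}\,e_i,e_j),T(e_i,e_j)\rangle\bigr)$. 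You give no mechanism making its integral vanish, and none follows evidently from CKL and balancedness.

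The paper never proves or uses $\nabla T=0$. The Jacobi identity needs much less: by the Bianchi identity you wrote, it is equivalent to the vanishing of the \emph{cyclic sum} of $\nabla T$. The paper gets this from $\sqrt{-1}\,\partial\bar\partial\omega=\Sigma=\sum_a\tau^a\wedge\overline{\tau^a}$. With it, the $\partial\bar\partial$-exact form $\mathcal Q_n=\sqrt{-1}\,\partial\bar\partial\bigl(\Sigma\wedge\omega^{n-3}/(n-3)!\bigr)$ equals $\sum_a\sum_{i<j<k}|C^a_{ijk}|^2\,\omega^n/n!$ pointwise, with $n=4$ handled via $\Sigma^2$ and $n=3$ via trace-freeness, and Stokes kills the Jacobiator. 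Without $\nabla T=0$, several of your later claims lose their support: that all fibres are isomorphic, that $R(X,\bar Y)$ acts by derivations, and that $T_p=0$ propagates by parallelism.

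Step~2 does not reach nilpotency even granting a Lie algebra. Ruling out a Levi factor gives only solvability. (Also, the Levi factor of a complex Lie algebra is complex semisimple, not ``compact type''.) Solvable plus unimodular does not imply nilpotent. For example, $[X,Y]=Y$, $[X,Z]=-Z$ is solvable, unimodular and not nilpotent. The algebra $[X,Y_k]=\zeta^{k-1}Y_k$, $k=1,2,3$, with $\zeta$ a primitive cube root of unity, is unimodular with vanishing Killing form and still not nilpotent.

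What is needed is that every adjoint character vanishes. Since the bracket varies from point to point and strict positivity holds at one point only, this requires a global vanishing theorem, which is the key idea missing from your plan. In the paper, the polynomials $\tr(\ad_u^s)$ are holomorphic symmetric tensors. The Killing form dies because $H^0(M,\Sym^2\varOmega M)=0$, and Cartan's criterion gives solvability. Then the tensors $W_{r,k}=\frac1{r!}\mathcal A_{r,2k}(P_{2k}^{\odot r})$ are pointwise pure powers of decomposable forms. They are killed by the Pl\"ucker-power vanishing theorem, an integrated Bochner argument on a branched cover. This forces all $\lambda_i=0$, and Engel's theorem gives nilpotency.

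If you really had $\nabla T=0$, each $\tr(\ad_u^s)$ would be parallel, and a maximization argument at the strong point would kill it. You do not use this either.

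Finally, Step~3 is easier than you fear and needs no derivation identity. For any CKL metric, $(\dif^\nabla)^2\tau=\Theta\wedge\tau=0$ combined with CKL gives $\widehat R_\nabla\circ\mathfrak B_\nabla=0$. Take $w=[u,v]_\nabla\neq0$ central, with $v\in\mathfrak A_{s-1}$. Then $\mathfrak B_\nabla(u\wedge v\wedge w)=w\odot w$, so $R_\nabla(w,\bar w,w,\bar w)=0$, contradicting positive HSC near the strong point. Holomorphy of $T$, not parallelism, then gives $T\equiv0$.
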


By \cite[Theorem~1.7]{ZhangZhang2023}, we have the following corollary:
\begin{corollary}
If a compact connected CKL Hermitian manifold has strongly quasi-positive
Chern RBC or Chern HSC, then its complex manifold is projective and rationally connected.
\end{corollary}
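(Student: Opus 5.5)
The statement to prove is the corollary: under CKL and strongly quasi-positive Chern HSC or RBC, $M$ is projective and rationally connected. The plan is to first reduce to the HSC case, then use Theorem~\ref{mainthm:CKL-Kahler} to pass to the K\"{a}hler category, and finally quote Zhang--Zhang.

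\emph{Reduction to HSC.} If the hypothesis is strongly quasi-positive Chern RBC, then by Proposition~\ref{prop:equiv_HSC_RBC} (which applies because $h$ is CKL) the Chern HSC is strongly quasi-positive as well. So in all cases we may assume that the Chern HSC of $h$ is nonnegative everywhere and strictly positive in every direction at some point $p_0$.

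\emph{Passing to a K\"{a}hler metric.} Since $M$ is compact, connected and CKL with strongly quasi-positive Chern HSC, Theorem~\ref{mainthm:CKL-Kahler} gives that the Chern torsion vanishes, so $d\omega=0$ and $h$ is K\"{a}hler. For a K\"{a}hler metric the Chern connection coincides with the Levi--Civita connection. Hence the Chern HSC is the usual Riemannian holomorphic sectional curvature of the K\"{a}hler metric $h$. Therefore $(M,h)$ is a compact K\"{a}hler manifold whose HSC is nonnegative everywhere and positive in all directions at $p_0$. In particular it is quasi-positive in the sense used by Zhang and Zhang, since our convention is the stronger one.

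\emph{Conclusion.} Applying \cite[Theorem~1.7]{ZhangZhang2023}, a compact K\"{a}hler manifold with quasi-positive HSC is projective and rationally connected. Both properties depend only on the complex structure $J$, so the underlying complex manifold of the original Hermitian manifold $(M,J,h)$ is projective and rationally connected.

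The only point that needs care is that the convention in Zhang--Zhang is implied by ours. They require HSC $\ge 0$ everywhere and $>0$ at one point, for all directions or for at least one direction. Our strong quasi-positivity gives positivity in every direction at $p_0$, which implies either reading. All the substantive work lies in Theorem~\ref{mainthm:CKL-Kahler}, which is assumed here.
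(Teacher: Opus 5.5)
Your proposal is correct and follows the paper's route. You convert RBC to HSC via Proposition~\ref{prop:equiv_HSC_RBC}, use Theorem~\ref{mainthm:CKL-Kahler} to conclude that $h$ is K\"{a}hler, and then apply \cite[Theorem~1.7]{ZhangZhang2023}. Your extra remarks make explicit what the paper leaves implicit: for a K\"{a}hler metric the Chern HSC is the usual HSC, and the strong quasi-positivity convention implies the one used by Zhang--Zhang.
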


Moreover, we can prove the following vanishing of mixed-type tensors, thanks to the results of \cite{LiZhangZhang2024}.

\begin{corollary}\label{cor:vanish_pq}
Under the hypotheses of Theorem~\ref{mainthm:CKL-Kahler}, there is a constant $C>0$ such that, for
all nonnegative integers $p$ and $q$,
\[
 p>Cq\quad\Longrightarrow\quad
 H^0\!\left(M,(T^{1,0}M)^{\otimes q}\otimes
 (T^{*1,0}M)^{\otimes p}\right)=0.
\]
In particular, $H^{p,0}(M)=0$ for $1\leq p\leq n$.
\end{corollary}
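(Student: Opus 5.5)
By Theorem~\ref{mainthm:CKL-Kahler}, the metric $h$ is K\"{a}hler, and its Chern connection coincides with the Levi-Civita connection. So $(M,h)$ is a compact K\"{a}hler manifold whose holomorphic sectional curvature is strongly quasi-positive in the convention of this paper. In particular it is nonnegative everywhere and strictly positive in every direction at some point $p_0$.

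The plan is then to apply the mixed-tensor vanishing theorem of \cite{LiZhangZhang2024} to this K\"{a}hler metric. That theorem states that on a compact K\"{a}hler manifold with quasi-positive holomorphic sectional curvature there is a constant $C>0$ such that $H^0(M,(T^{1,0}M)^{\otimes q}\otimes (T^{*1,0}M)^{\otimes p})=0$ whenever $p>Cq$. One must check that our hypothesis implies theirs. Their quasi-positivity only requires positivity at one point in the relevant directions, and our strongly quasi-positive condition is stronger, so the implication is immediate. The constant $C$ depends only on $(M,h)$. If their statement is phrased via RC-positivity or a weighted curvature bound at $p_0$, I would also use the equivalence with Chern RBC in Proposition~\ref{prop:equiv_HSC_RBC}. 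I would then check that strict positivity at $p_0$ gives the strict inequality they need on an open set. The main point needing care is exactly this matching of positivity conventions and the precise form of their constant $C$.

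For the final statement, take $q=0$ and $1\le p\le n$. Then $p>C\cdot 0=0$, so $H^0(M,(T^{*1,0}M)^{\otimes p})=0$. The bundle $\Lambda^p T^{*1,0}M$ is a direct summand of $(T^{*1,0}M)^{\otimes p}$ via antisymmetrization, so its holomorphic sections embed into holomorphic sections of the tensor power. Hence $H^{p,0}(M)\cong H^0(M,\Omega^p_M)=0$. Alternatively, this also follows from rational connectedness in the previous corollary.
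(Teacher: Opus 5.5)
Your proposal is correct and follows essentially the same route as the paper: use Theorem~\ref{mainthm:CKL-Kahler} to reduce to a compact K\"{a}hler manifold with quasi-positive holomorphic sectional curvature, then invoke \cite{LiZhangZhang2024}. The paper pins down the convention-matching you flagged by passing through projectivity and rational connectedness from \cite{ZhangZhang2023}, then HN-positivity of $(T^{1,0}M,\bar\partial)$ via Theorem~1.7 and Remark~1.6 of \cite{LiZhangZhang2024}, and finally their Corollary~4.4, which gives the vanishing for $p\ge Cq$ and hence also $H^{p,0}(M)=0$ as in your last paragraph.
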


In \cite[Conjecture~1.6(a)(b)]{YangZheng2019}, the authors conjectured that
\begin{conjecture}
    Let $(M^n,h)$ be a compact connected Hermitian manifold. If $\RBC_\nabla(e,a)>0$ for any unitary frame $e$ and $a\in\mathbb{R}_{\ge 0}^n\backslash\{0\}$, then $M$ is simply connected and rationally connected.
\end{conjecture}

We find a counterexample for this conjecture.
\begin{mainthm}\label{mainthm:counterexample_YZ}
    Let $0<q<1$ and let
    \[
        H_q = (\mathbb{C}^2\backslash\{0\})/\langle z\mapsto qz\rangle
    \]
    be the Hopf surface. There is a positive number $c$ such that for any nonzero $\epsilon\in(-c,c)$, there is a Hermitian metric $h_\epsilon$ on $H_q$ such that its Chern RBC is positive. 
\end{mainthm}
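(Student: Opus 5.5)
The plan is to obtain $h_\epsilon$ by perturbing the standard metric on $H_q$,
\[
 h_0=\frac{|dz_1|^2+|dz_2|^2}{|z|^2},
\]
and to choose the perturbation so that it keeps the large symmetry group of $h_0$. The first step is to understand why $h_0$ just fails the required positivity. Since $h_0=e^{-\varphi}|dz|^2$ with $\varphi=\log|z|^2$, its Chern curvature is $R^\nabla=\partial\bar\partial\varphi\otimes\mathrm{Id}$. In a unitary frame this gives $R_{i\bar i j\bar j}=\varphi_{i\bar i}$. Hence
\[
\RBC_\nabla(e,a)=\Bigl(\sum_i \varphi_{i\bar i}a_i\Bigr)\Bigl(\sum_j a_j\Bigr)\ge 0 .
\]
The form $\partial\bar\partial\varphi$ is the pullback of the Fubini--Study form. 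It is positive on the complex tangent directions of the spheres and vanishes exactly in the radial direction $\xi=\sum z_i\partial_{z_i}$. So $\RBC_\nabla(e,a)=0$ precisely when $e_1\parallel\xi$ and $a=(a_1,0)$. The null set is therefore the radial holomorphic sectional curvature, and the perturbation has to make that quantity strictly positive.

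For the perturbation I would take the $U(2)$- and dilation-invariant family
\[
 h_\epsilon=\frac{1}{|z|^2}\Bigl(|dz|^2+\epsilon\,\frac{|\bar z_1dz_1+\bar z_2dz_2|^2}{|z|^2}\Bigr).
\]
It descends to $H_q$ and is positive definite for $|\epsilon|<1$. If this one-parameter family is not enough, I would pass to the two-parameter family obtained by also rescaling the $|dz|^2$ part by a factor $1+\epsilon'$. Because $U(2)\times\mathbb R_{>0}$ acts transitively on $\mathbb C^2\setminus\{0\}$ by isometries of $h_\epsilon$, it suffices to compute the full Chern curvature tensor at the point $z=(1,0)$. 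At that point I would use the unitary frame $e_1\parallel\partial_{z_1}$ (radial) and $e_2\parallel\partial_{z_2}$. The isotropy group $U(1)\times U(1)$ forces every component to vanish except those of the forms $R_{1\bar11\bar1}$, $R_{2\bar22\bar2}$, $R_{1\bar12\bar2}$, $R_{2\bar21\bar1}$, $R_{1\bar22\bar1}$ and $R_{2\bar11\bar2}$. These are explicit functions of $\epsilon$, computed from $R_{i\bar jk\bar\ell}=-\partial_i\partial_{\bar j}h_{k\bar\ell}+h^{p\bar q}\partial_i h_{k\bar q}\partial_{\bar j}h_{p\bar\ell}$ in coordinates. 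The goal of this step is to show that $R_{1\bar11\bar1}(\epsilon)$ vanishes at $\epsilon=0$ and is positive for small $\epsilon\neq0$. The statement "for any nonzero $\epsilon$" suggests that the leading term is of order $\epsilon^2$, or that it is first order with a sign-independent form once the normalisation is chosen. If the simple family gives the wrong sign for one sign of $\epsilon$, I would use the extra parameter, taking $\epsilon'$ proportional to $\epsilon^2$ or to $|\epsilon|$.

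The last step, and the one I expect to be the main obstacle, is to prove positivity uniformly over all unitary frames $e=(v,w)$ at the base point and all $a$ in the simplex $a_1+a_2=1$. Write $v=\cos\theta\,e_1+\sin\theta\,e^{i\alpha}e_2$ and let $w$ be its orthogonal complement. Then $\RBC$ becomes an explicit trigonometric polynomial in $(\theta,\alpha,a_1)$ with coefficients depending on $\epsilon$. Away from a neighborhood of the null configuration $\{\theta=0,\ a_2=0\}$ (together with the symmetric configuration $\theta=\pi/2,\ a_1=0$), positivity for small $\epsilon$ follows from continuity and compactness, since the $\epsilon=0$ value is bounded below there. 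Near the null configuration, I would expand to second order in $(\theta,a_2)$ and show that the resulting quadratic form is positive definite. Its entries are the $O(1)$ Fubini--Study terms, which are positive in $\theta$ and in $a_2$, and the new radial term $R_{1\bar11\bar1}(\epsilon)>0$. The danger is that cross terms of size $O(\epsilon)\cdot\theta$ could beat a diagonal term $R_{1\bar11\bar1}(\epsilon)$ that is only of size $\epsilon^2$. I would first check whether these cross terms vanish by the isotropy symmetry. If they do not, I would try to show they are dominated via Cauchy--Schwarz, using the $O(1)$ Fubini--Study entries in $\theta$ and $a_2$. This domination is what determines the constant $c$.
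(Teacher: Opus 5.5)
There is a genuine gap, at exactly the step you describe as the goal. For your family $R_{1\bar11\bar1}(\epsilon)$ is not positive for small $\epsilon\neq0$; it vanishes identically, and no $U(2)$-invariant ansatz can repair this.

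Up to a constant factor, your two-parameter family already exhausts all $U(2)\times\mathbb R_{>0}$-invariant Hermitian metrics, since the isotropy at $(1,0)$ forces $h$ to be diagonal there. Introducing $\epsilon'$ just multiplies a member of the one-parameter family by a constant, which cannot change the sign of RBC.

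For any such metric, the punctured line $\Sigma=\{z_2=0\}$ carries the flat metric $c\,|dz_1|^2/|z_1|^2$. The isometry $\mathrm{diag}(1,-1)$ lies in your isotropy group and fixes $\Sigma$ pointwise, so it forces $\nabla_\xi\xi$ to be tangent to $\Sigma$ along $\Sigma$. The Gauss equation for the holomorphic subbundle $T\Sigma\subset TM|_\Sigma$,
\[
R(\xi,\bar\xi,\xi,\bar\xi)=R^{\Sigma}(\xi,\bar\xi,\xi,\bar\xi)+\bigl|(\nabla_\xi\xi)^{\perp}\bigr|^2,
\]
then shows that the radial holomorphic sectional curvature is zero.

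You can see this directly in your coordinate formula at $(1,0)$. The term $\partial_1h_{1\bar2}$ has nonzero isotropy weight and vanishes, and the remaining terms give $-(1+\epsilon)+(1+\epsilon)^2/(1+\epsilon)=0$. Hence the frame with $e_1$ radial and $a=(1,0)$ gives $\RBC=0$ for every $\epsilon$. Dropping dilation invariance would not help either: the elliptic curve $\Sigma/\langle q\rangle$ has total curvature zero by Gauss--Bonnet, and the reflection still kills the normal term.

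The missing idea is to break the symmetry down to $\SU(2)\times\mathbb R_{>0}$, which still acts transitively. Then $\nabla_\xi\xi$ acquires a component normal to the radial line, and $|(\nabla_\xi\xi)^{\perp}|^2$ becomes the source of positivity. The paper takes
\[
h_\epsilon(\xi,\xi)=|z|^{-2}\Bigl|\xi+\epsilon\,\tfrac{z_1\xi_2-z_2\xi_1}{|z|^2}\,z\Bigr|^2 ;
\]
the factor $z_1\xi_2-z_2\xi_1$ is $\SU(2)$-invariant but not invariant under scalar rotations. This yields $R_{1\bar11\bar1}=\epsilon^2$: the radial line is still flat, and the normal component has norm $|\epsilon|$.

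The price is that your selection rule is lost. First-order components such as $R_{1\bar11\bar2}=\epsilon$ appear, which is exactly the cross-term danger you anticipated. The paper handles this in two moves:
\begin{itemize}
\item It reduces positivity over all frames and weights to $Q_p(B)=\sum R_{i\bar jk\bar\ell}B_{ij}B_{k\ell}>0$ for every nonzero positive semidefinite Hermitian $B$ with diagonal $x,y$ and off-diagonal entries $a\pm\sqrt{-1}b$. This is much simpler than your trigonometric parametrization.
\item Since $|a|\le\sqrt{xy}$, the $O(\epsilon)$ cross term $4\epsilon(1+\epsilon^2)ay$ is at most $4|\epsilon|(1+\epsilon^2)x^{1/2}y^{3/2}$. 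It is therefore absorbed by the $O(1)$ coefficients of $xy$ and $y^2$. Only a term of size $|\epsilon|^3x^{3/2}y^{1/2}$ has to compete with $\epsilon^2x^2$, and Young's inequality settles this for $0<|\epsilon|\le 1/4$.
\end{itemize}
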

Since positive RBC implies positivity of HSC, this gives a Hermitian manifold with positive HSC but is not simply-connected or rationally connected.

\subsection*{Declaration on the use of AI}

During the exploratory stage of this work, the authors used large language models to suggest possible proof strategies. The resulting suggestions were treated solely as heuristic input and were not accepted without independent mathematical verification. All statements, proofs, and computations included in the final manuscript were independently checked and revised by the authors. The authors take full responsibility for the originality, correctness, and presentation of the work.

\subsection*{Acknowledgement}

Both authors thank Professor Ping Li for proposing this problem and suggestions.

\section{Chern torsion algebra}
Let $D$ be a Hermitian connection on holomorphic tangent bundle $T^{1,0}M$ i.e. $Dh = 0$ and $DJ=0$. The torsion of $D$ is defined as
\[
    T_D(X,Y):=D_XY-D_YX-[X,Y],\quad \text{for any locally smooth vector fields } X,Y,
\]
It is tensorial in $X$ and $Y$, thus we can define  for $x\in M$,
\[
    [u,v]_{D}:=T_D(u,v)_x\in T^{1,0}_xM,\quad \forall u,v\in T^{1,0}_xM.
\]
Then $\mathfrak{A}_{x,D} := (T^{1,0}_xM,[\cdot,\cdot]_{D})$ is a complex anticommutative algebra. If $D$ is the Chern connection $\nabla$, we call $\mathfrak{A}_{x,\nabla}$ the \emph{Chern torsion algebra} of the Hermitian manifold $(M,h)$ at $x$. To the best of our knowledge, the concept of torsion algebra was used in \cite{NiZhengAmbrose2023,wang2026bismuttorsionparallelhermitianmanifoldsconstant}.

We denote the smooth bundle of $(p,q)$ forms $\wedge^{p,q}M$, and especially the holomorphic cotangent bundle $\varOmega M=\wedge^{1,0}M=T^{*1,0}M$ and $\varOmega^p M:=\wedge^p\varOmega M=\wedge^{p,0}M$.

In a locally smooth unitary frame $e_1,\ldots,e_n$, the connection matrix $\theta=(\theta^i_j)$ of the Chern connection $\nabla$ is defined by
\[
    \nabla e_i = \theta^j_i \otimes e_j, \quad \theta^j_i~\text{are locally smooth 1-forms for all } i,j
\]
The \emph{Chern torsion} is 
\[
    T_{\nabla}(X,Y):=\nabla_XY-\nabla_YX-[X,Y],\quad \text{for any locally smooth vector fields } X,Y.
\]
Let $\{e^i\}$ be the dual frame of $\{e_i\}$ with respect to $h$, the $i$-th torsion form is 
\[
    \tau^i(X,Y) := e^i(T_{\nabla}(X,Y)),\quad \text{for any locally smooth vector fields } X,Y.
\]
and in tensor notation, write
\[
    \tau^i=\frac{1}{2} T^i_{jk} e^j\wedge e^k,\quad \tau = \tau^i\otimes e_i.
\]

\begin{proposition}\label{prop:basic_identities}
Let $(M^n,h)$ be a compact Chern--K\"ahler-like Hermitian manifold, and define $\dif^\nabla$ to be the covariant exterior derivative with respect to the Chern connection $\nabla$.
Then
\begin{align}
    \dif\omega^{n-1}&=0,
    \label{eq:balanced}\\
    \dif^\nabla\tau&=0,
    \label{eq:parallel_tau}\\
    \sqrt{-1}\,\partial\bar\partial\omega
    &=
    \sum_a\tau^a\wedge\overline{\tau^a}.
    \label{eq:wedge_of_tau}
\end{align}
In particular, $T_\nabla$ is a holomorphic tensor and
\begin{equation}
    \sum_a T^a_{aj}=0.
    \label{eq:trace_T}
\end{equation}

Moreover, define
\[
    \widehat R_\nabla:\Sym^2(T^{1,0}M)\longrightarrow\Sym^2(T^{1,0}M)
\]
by
\[
    h\bigl(\widehat R_\nabla(u\odot v),z\odot w\bigr) = R_\nabla(u,\bar z,v,\bar w),\quad u_1\odot\cdots\odot u_s :=\frac{1}{s!}\sum_{\sigma\in S_s}u_{\sigma(1)}\otimes\cdots\otimes u_{\sigma(s)},
\]
and define the torsion Bianchi map
\[
    \mathfrak B_\nabla: \wedge^3(T^{1,0}M) \longrightarrow \Sym^2(T^{1,0}M)
\]
by
\[
    \mathfrak B_\nabla(u\wedge v\wedge w) = T_\nabla(u,v)\odot w + T_\nabla(v,w)\odot u + T_\nabla(w,u)\odot v.
\]
Then
\begin{equation}\label{eq:bianchi_map}
    \widehat R_\nabla\circ\mathfrak B_\nabla=0.
\end{equation}
\end{proposition}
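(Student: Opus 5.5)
We work in a local unitary frame with the Chern structure equations
\[
    \dif e^i=-\theta^i_j\wedge e^j+\tau^i,\qquad \Theta^i_j=\dif\theta^i_j+\theta^i_k\wedge\theta^k_j,
\]
where $\tau^i$ is of type $(2,0)$ and $\Theta$ is of type $(1,1)$.

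\textbf{First Bianchi identity.} Differentiating the first structure equation gives
\[
    \dif^\nabla\tau=\Theta\wedge e,\qquad\text{i.e.}\quad \dif\tau^i+\theta^i_j\wedge\tau^j=\Theta^i_j\wedge e^j .
\]
The left side is a $(3,0)+(2,1)$ form. The right side is a $(2,1)$ form with coefficients $R_{k\bar\ell j}{}^{i}$, antisymmetrized in the two holomorphic slots $k$ and $j$. The CKL symmetry \eqref{eq:intro-ckl} says exactly that $R_{k\bar\ell j\bar i}$ is symmetric in $k,j$, so $\Theta^i_j\wedge e^j=0$. This yields \eqref{eq:parallel_tau}. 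Its $(2,1)$-part says $\bar\partial$-type derivatives of $T^i_{jk}$ vanish, i.e.\ $\nabla_{\bar\ell}T=0$. Since the Chern connection has $\nabla^{0,1}=\bar\partial$, this is holomorphicity of $T_\nabla$.

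\textbf{Formula \eqref{eq:wedge_of_tau}.} Write $\omega=\sqrt{-1}\sum_a e^a\wedge\overline{e^a}$ and compute $\dif\omega$ with the structure equations. The $\theta$-terms cancel by skew-Hermitianity of $\theta$, leaving
\[
    \partial\omega=\sqrt{-1}\,\tau^a\wedge\overline{e^a}.
\]
Now apply $\bar\partial$. By \eqref{eq:parallel_tau} the term $\bar\partial\tau^a$ equals $-(\theta^a_j\wedge\tau^j)^{(2,1)}$, and this cancels against the $\theta$-part of $\dif\overline{e^a}$. The remaining term $\overline{\tau^a}$ gives $\sqrt{-1}\partial\bar\partial\omega=\sum\tau^a\wedge\overline{\tau^a}$ after sign bookkeeping.

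\textbf{Balanced condition and trace.} We cite Yang--Zheng \cite{YangZheng2018CurvatureTensors} for \eqref{eq:balanced}; this is where compactness enters. A self-contained route is to wedge \eqref{eq:wedge_of_tau} with $\omega^{n-2}$ and integrate. This gives
\[
    \int\tau^a\wedge\overline{\tau^a}\wedge\omega^{n-2}=c_n\int\bigl(|\eta|^2-\tfrac12|T|^2\bigr)\omega^n ,
\]
where $\eta_j=\sum_aT^a_{aj}$ is the torsion $1$-form. The left side equals $\int\partial\bar\partial\omega\wedge\omega^{n-2}$, which is handled through the Gauduchon-type identity. The cleaner alternative is the standard fact that $\eta$ is holomorphic (it is a trace of the holomorphic $T$). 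Its $\bar\partial^*$-type divergence identity then forces $\eta=0$, which is equivalent to balancedness. We expect to simply invoke \cite{YangZheng2018CurvatureTensors} and use that $\dif\omega^{n-1}=0$ is equivalent to $\sum_aT^a_{aj}=0$, giving \eqref{eq:trace_T}.

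\textbf{Bianchi map.} Take the covariant exterior derivative of \eqref{eq:parallel_tau}, which gives
\[
    0=\dif^\nabla\dif^\nabla\tau=\Theta\wedge\tau ,
\]
since $\Theta$ acts on the vector-valued form. Alternatively, use the second Bianchi identity $\dif^\nabla\Theta=\Theta\wedge\tau$-type terms with CKL; the same computation follows from $\dif^\nabla(\Theta\wedge e)=\Theta\wedge\dif^\nabla e$.

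In components, the $(3,1)$ identity reads: the cyclic sum over $u,v,w$ of $R(T(u,v),\bar z,w,\bar\cdot)$ vanishes. Using the CKL symmetry to move slots, this can be rewritten as
\[
    \sum_{\mathrm{cyc}}R(T(u,v),\bar z,w,\bar w')=0 .
\]
Symmetrizing in $z,w'$ is exactly $h(\widehat R_\nabla\mathfrak B_\nabla(u\wedge v\wedge w),z\odot w')=0$, which is \eqref{eq:bianchi_map}.

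\textbf{Main obstacle.} The delicate step is this last index bookkeeping. We must verify that the curvature-torsion term produced by $\dif^\nabla\dif^\nabla\tau$ pairs $T$ with the first or third slot of $R$. CKL symmetry lets us interchange these slots, which is what yields the symmetric-square formulation.
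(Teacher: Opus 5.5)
Your proposal is correct and follows essentially the same route as the paper. You combine the first Bianchi identity with CKL to get $\dif^\nabla\tau=0$, differentiate $\partial\omega=\sqrt{-1}\sum_a\tau^a\wedge\overline{e^a}$ (the paper differentiates $\bar\partial\omega$ instead, an immaterial difference), cite Yang--Zheng for balancedness and the trace condition, and read off $\widehat R_\nabla\circ\mathfrak B_\nabla=0$ from $(\dif^\nabla)^2\tau=\Theta\wedge\tau=0$ after using CKL to move the torsion from the third slot of $R_\nabla$ into the first. One aside is inaccurate: $\dif^\nabla\Theta$ does not produce ``$\Theta\wedge\tau$-type terms'' (in fact $\dif^\nabla\Theta=0$), but the identity $\dif^\nabla(\Theta\wedge e)=\Theta\wedge\tau$ that you actually rely on is correct.
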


\begin{proof}
Let $\theta=(\theta^i_j)$ and $\Theta=(\Theta^i_j)$ be respectively the connection and curvature matrices of the Chern connection with respect to the unitary frame $\{e_i\}$.  The Chern structure equations are
\[
    \dif e^a = -\theta^a_b\wedge e^b+\tau^a, \qquad \Theta^a_b = \dif\theta^a_b+\theta^a_c\wedge\theta^c_b.
\]
Applying $\dif$ to the first structure equation gives the first Bianchi identity
\[
    \dif\tau^a = \Theta^a_b\wedge e^b-\theta^a_b\wedge\tau^b.
\]
Thus 
\[
    \dif^\nabla\tau =\dif\tau^a\otimes e_a+\tau^a\otimes\nabla e_a =\Theta^a_b\wedge e^b\otimes e_a =R^a_{bi\bar{j}} e^i\wedge\bar e^j\wedge e^b\otimes e_a 
\]
Since $h$ is Chern--K\"ahler-like,
\[
    R_{i\bar j k\bar\ell} = R_{k\bar j i\bar\ell} ~\Rightarrow R^a_{bi\bar j}=R^a_{ib\bar j},
\]
and hence
\[
    \Theta^a_b\wedge e^b=0,
\]
therefore
\[
    \dif^\nabla\tau=0,
\]
whose $(2,1)$-part is $\bar{\partial}^{\nabla}\tau=0$ which is equivalent to $T_\nabla$ being holomorphic.

The fundamental form is
\[
    \omega = \sqrt{-1}\sum_a e^a\wedge\bar{e}^a.
\]
Using the first structure equation and the fact that the Chern connection is Hermitian, one obtains
\[
    \partial\omega =\sqrt{-1}\sum_a \tau^a\wedge\bar{e}^a, \qquad\bar\partial\omega =-\sqrt{-1}\sum_a e^a\wedge\bar{\tau}^a.
\]
Applying $\dif$ to the second equality and using $\dif^\nabla\tau=0$ gives
\[
    \sqrt{-1}\,\partial\bar\partial\omega = \sum_a\tau^a\wedge\bar{\tau}^a.
\]

By the balancedness theorem for compact Chern--K\"ahler-like manifolds,
\[
    \dif\omega^{n-1}=0;
\]
see \cite[Theorem~1.3]{YangZheng2018CurvatureTensors}. The Lee form of $\omega$ is, up to a fixed nonzero normalization, the trace of the Chern torsion. Thus
\[
    \dif\omega^{n-1}=0 \quad\Longleftrightarrow\quad \sum_aT^a_{aj}=0
\]
for every $j$, proving \eqref{eq:trace_T}.

Finally, apply $\dif^\nabla$ to $\dif^\nabla\tau=0$. Since the square of the covariant exterior derivative is the curvature action, we obtain
\[
    0=(\dif^\nabla)^2\tau=\Theta^a_b\wedge\tau^b\otimes e_a.
\]
Write
\[
    \Theta^a_b=R^a_{b i\bar j}e^i\wedge\bar e^j,\qquad\tau^b=\frac12T^b_{k\ell}e^k\wedge e^\ell.
\]
The coefficient of
\[
    e^i\wedge e^k\wedge e^\ell\wedge\bar e^j\otimes e_a
\]
in the preceding identity is
\[
    \sum_b\left(R^a_{b i\bar j}T^b_{k\ell}+R^a_{b k\bar j}T^b_{\ell i}+R^a_{b\ell\bar j}T^b_{ik}\right)=0.
\]
After lowering the index $a$, this becomes
\[
    \sum_b\left(R_{i\bar j b\bar q}T^b_{k\ell}+R_{k\bar j b\bar q}T^b_{\ell i}+R_{\ell\bar j b\bar q}T^b_{ik}\right)=0.
\]
Using the CKL symmetry
\[
    R_{i\bar j b\bar q}=R_{b\bar j i\bar q},
\]
we obtain
\[
    \sum_b\left(T^b_{ik}R_{b\bar j\ell\bar q}+T^b_{k\ell}R_{b\bar j i\bar q}+T^b_{\ell i}R_{b\bar j k\bar q}\right)=0.
\]
Equivalently, for all $u,v,w,z,q\in T^{1,0}M$,
\[
\begin{aligned}
0=&R_\nabla\bigl(T_\nabla(u,v),\bar z,w,\bar q\bigr)\\
&+R_\nabla\bigl(T_\nabla(v,w),\bar z,u,\bar q\bigr)\\
&+R_\nabla\bigl(T_\nabla(w,u),\bar z,v,\bar q\bigr).
\end{aligned}
\]
By the definitions of $\widehat R_\nabla$ and $\mathfrak B_\nabla$, the right-hand side is
\[
    h\left((\widehat R_\nabla\circ\mathfrak B_\nabla)(u\wedge v\wedge w),z\odot q\right).
\]
Therefore
\[
    h\left((\widehat R_\nabla\circ\mathfrak B_\nabla)(u\wedge v\wedge w),z\odot q\right)=0
\]
for every $z,q\in T^{1,0}M$. Since the tensors $z\odot q$ span $\Sym^2(T^{1,0}M)$ and the induced Hermitian metric on $\Sym^2(T^{1,0}M)$ is nondegenerate, it follows that
\[
    (\widehat R_\nabla\circ\mathfrak B_\nabla)(u\wedge v\wedge w)=0.
\]
Since $u,v,w$ are arbitrary, we conclude that
\[
    \widehat R_\nabla\circ\mathfrak B_\nabla=0.
\]
\end{proof}

Set
\[
    \Sigma = \sum_a \tau^a\wedge \bar{\tau}^a,
\]
and define the Jacobiator coefficients
\[
    C_{ijk}^a=\sum_b(T_{ij}^bT_{bk}^a+T_{jk}^bT_{bi}^a+T_{ki}^bT_{bj}^a).
\]
For $n \ge 4$, we define the following top-degree form
\[
    \mathcal{Q}_n =  \begin{cases}
        \Sigma^2, & n=4,\\
        \Sigma^2\wedge \frac{\omega^{n-4}}{(n-4)!}+\sqrt{-1}\Sigma\wedge \partial \omega \wedge \bar{\partial} \omega\wedge \frac{\omega^{n-5}}{(n-5)!}, & n>4
    \end{cases}
\]
\begin{lemma}\label{lemma:Q_n}
    For $n\ge 4$, we have
    \[
        \mathcal{Q}_n=\sum_{a=1}^n\sum_{i<j<k}|C_{ijk}^a|^2\frac{\omega^n}{n!}.
    \]
    When $n=3$, the trace condition \eqref{eq:trace_T} implies that $C_{123}^a=0$ for every $a$.
\end{lemma}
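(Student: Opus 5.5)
The identity for $n\ge 4$ is pointwise and algebraic, so I would verify it at a fixed point $x$ in a unitary frame $e_1,\dots,e_n$ with dual coframe $e^1,\dots,e^n$. There $\omega=\sqrt{-1}\sum_a e^a\wedge\bar e^a$, $\tau^a=\tfrac12T^a_{jk}e^j\wedge e^k$, and the proof of Proposition~\ref{prop:basic_identities} gives $\partial\omega=\sqrt{-1}\sum_a\tau^a\wedge\bar e^a$ and $\bar\partial\omega=-\sqrt{-1}\sum_b e^b\wedge\bar\tau^b$. Every term of $\mathcal Q_n$ is then a polynomial in the $T^a_{jk}$ and their conjugates, and the claim becomes an identity between quartic expressions in $T$ and $\bar T$.

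\textbf{Step 1: repackage the Jacobiator.} Introduce the $(1,0)$-forms $\gamma^a_b=\sum_kT^a_{bk}e^k$, which are the rows of $\ad$. Define the $(3,0)$-forms
\[
J^a=\sum_b\tau^b\wedge\gamma^a_b .
\]
Their coefficient on $e^i\wedge e^j\wedge e^k$ for $i<j<k$ is, up to a universal constant, the cyclic sum $C^a_{ijk}$. For a $(3,0)$-form $\alpha$ the standard pairing gives
\[
c_3\,\alpha\wedge\bar\alpha\wedge\frac{\omega^{n-3}}{(n-3)!}=\sum_{i<j<k}|\alpha_{ijk}|^2\frac{\omega^n}{n!},
\]
with $c_3$ a power of $\sqrt{-1}$ times a sign. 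So the right-hand side of the lemma is a fixed multiple of $\sum_aJ^a\wedge\bar J^a\wedge\omega^{n-3}/(n-3)!$.

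\textbf{Step 2: rewrite $\mathcal Q_n$.} For the first term, $\Sigma^2=\sum_{a,b}\tau^a\wedge\tau^b\wedge\bar\tau^a\wedge\bar\tau^b$ (even forms commute). Wedging with $\omega^{n-4}/(n-4)!$ gives $\sum_{a,b}\|\tau^a\wedge\tau^b\|^2$. Expanding $\tau^a\wedge\tau^b$, the coefficient on $e^i\wedge e^j\wedge e^k\wedge e^l$ is a sum of products $T^a_{\cdot\cdot}T^b_{\cdot\cdot}$ over the three pairings of $\{i,j,k,l\}$.

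For the second term,
\[
\sqrt{-1}\,\Sigma\wedge\partial\omega\wedge\bar\partial\omega=\sqrt{-1}\sum_{c,a,b}\tau^c\wedge\bar\tau^c\wedge\tau^a\wedge\bar e^a\wedge e^b\wedge\bar\tau^b .
\]
Wedging with $\omega^{n-5}/(n-5)!$ contracts the barred $\bar e^a$ against an unbarred index. This produces quadratic-in-$\tau$ contractions of the form $\sum_a\iota_{e_a}(\tau^c\wedge\tau^a)$, paired against their conjugates. These contractions are exactly where the products $T^b_{ij}T^a_{bk}$ come from.

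\textbf{Step 3: compare quartic expressions.} I would expand $\sum_{a,i<j<k}|C^a_{ijk}|^2$ into diagonal terms $|T^b_{ij}T^a_{bk}|^2$ and cross terms between the three cyclic pieces. Then I would match each group with the terms from Step 2. The $\Sigma^2$ term should account for the terms where the summed index $b$ appears in both the "bracket" slot and the "output" slot across the two factors. The $\partial\omega\wedge\bar\partial\omega$ term should supply the corrections in which a free index is contracted through $\omega$.

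It may be that the trace condition \eqref{eq:trace_T} is needed to kill leftover terms of the form $\sum_aT^a_{aj}(\cdots)$. I expect such contractions to appear when $\bar e^a$ pairs with the index $a$ of $\tau^a$. I would track these explicitly and use \eqref{eq:trace_T} if they survive.

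\textbf{Main obstacle.} The hard part is this bookkeeping: fixing the powers of $\sqrt{-1}$, the reordering signs, and the combinatorial factors ($\tfrac12$'s, and $\omega^m/m!$ against $e^I\wedge\bar e^I$) so that the coefficients match exactly. To make this manageable I would first check the identity for $n=4$ on monomials, i.e.\ torsion supported on a few components, to pin down the constants. I would then do the general expansion with a uniform sign convention, writing all top forms as multiples of $\prod_i(\sqrt{-1}e^i\wedge\bar e^i)$.

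\textbf{The case $n=3$.} The Jacobiator is trilinear and alternating, so only $C^a_{123}$ matters. Using the cross product, write $T(x,y)=A(x\times y)$ for a $3\times3$ matrix $A$. A direct computation shows that $\tr\ad_x$ is the pairing of $x$ with the vector dual to the antisymmetric part of $A$. It also shows that the Jacobiator $J(e_1,e_2,e_3)$ is $A$ applied to that same vector, up to sign. The trace condition \eqref{eq:trace_T} says exactly $\tr\ad=0$, so $A$ is symmetric and the Jacobiator vanishes. Alternatively, I would simply expand $C^a_{123}$ and substitute $T^1_{1j}=-T^2_{2j}-T^3_{3j}$.
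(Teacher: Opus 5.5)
Your $n=3$ argument is correct and amounts to the paper's computation. With $T(x,y)=A(x\times y)$, you get $J(e_1,e_2,e_3)=A\bigl(\sum_i (Ae_i)\times e_i\bigr)$, and $\sum_i(Ae_i)\times e_i$ is the vector $(\tr\ad_{e_j})_j$. The paper's three coefficients in $C^a_{123}$ are exactly the components of \eqref{eq:trace_T}. Your Step~1 also matches the paper: your $J^a$ is $\mathcal J^a=\sum_b\tau^b\wedge\iota_b\tau^a$, with constant $1$. The gap is $n\ge4$. Step~3 only promises to expand both sides and match groups of terms, with a guessed division of labour, and leaves open whether \eqref{eq:trace_T} is used. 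That matching is the whole content of the lemma, so the proposal does not prove the main identity.

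The missing idea is the structural identity the paper uses. Put $W^{ab}=\tau^a\wedge\tau^b$ and $\vartheta=\sum_b\iota_b\tau^b$. Since the $\tau^a$ are even, $\sum_b\iota_bW^{ab}=\mathcal J^a+\tau^a\wedge\vartheta$. Using $\varepsilon_b^*=\iota_b$ and $\iota_c\varepsilon_b+\varepsilon_b\iota_c=\delta_{bc}$, you get
\[
\sum_a\Bigl|\sum_b\iota_bW^{ab}\Bigr|^2=\sum_{a,b}|W^{ab}|^2-\sum_{a,b,c}\langle\varepsilon_cW^{ab},\varepsilon_bW^{ac}\rangle .
\]
By the Hodge identity with $r=4$ and $r=5$, the right-hand side times $\omega^n/n!$ is exactly $\mathcal Q_n$. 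The $r=5$ term carries one minus sign, from moving $e^c$ past three antiholomorphic $1$-forms.

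This identity also shows that the split you anticipate is not how the terms combine. For $n=4$ there is no $\partial\omega\wedge\bar\partial\omega$ term, because the $5$-forms $\varepsilon_cW^{ab}$ vanish; so $\Sigma^2$ alone must give all of $\sum|C^a_{ijk}|^2$. For $n>4$ that term equals $\sum_a|\sum_b\iota_bW^{ab}|^2-\sum_{a,b}|W^{ab}|^2$, so it cancels $\Sigma^2$ entirely rather than correcting part of it.

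Finally, the trace condition is essential, not an optional clean-up. In general
\[
\mathcal Q_n=\sum_a|\mathcal J^a+\tau^a\wedge\vartheta|^2\,\frac{\omega^n}{n!}.
\]
For a concrete failure, take $n=4$ with $\tau^1=e^1\wedge e^2$, $\tau^3=e^3\wedge e^4$ and all other $\tau^a=0$. Then every $C^a_{ijk}$ vanishes, while $\mathcal Q_4=\Sigma^2=2\,\omega^4/4!$.
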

\begin{proof}
The assertion is pointwise. Fix $x\in M$ and choose a local Chern-normal unitary frame $\{e_i\}_{i=1}^n$ at $x$ such that $\theta_b^a(x)=0$, with dual coframe $\{e^i\}_{i=1}^n$. Thus, at $x$,
\[
    \omega=\sqrt{-1}\sum_{r=1}^n e^r\wedge\bar e^r, \qquad \tau^a=\sum_{i<j}T^a_{ij}e^i\wedge e^j,
\]
and
\[
    \partial\omega=\sqrt{-1}\sum_b\tau^b\wedge\bar e^b,\qquad\bar\partial\omega=-\sqrt{-1}\sum_c e^c\wedge\bar\tau^c.
\]

Assume first that $n\geq4$. Define
\[
    W^{ab}:=\tau^a\wedge\tau^b\in\varOmega^4_x M,\qquad\mathcal J^a:=\sum_b\tau^b\wedge\iota_{b}\tau^a\in\varOmega^3_xM,
\]
where $\iota_b$ denotes the interior product with $e_b$. Thus
\[
    \iota_{b}\tau^a=\sum_qT^a_{bq}e^q,
\]
the coefficient of $e^i\wedge e^j\wedge e^k$, where $i<j<k$,
in $\mathcal J^a$ is
\[
    \sum_b\left(T^b_{ij}T^a_{bk}+T^b_{jk}T^a_{bi}+T^b_{ki}T^a_{bj}\right)=C^a_{ijk}.
\]
Hence
\begin{equation}\label{eq:Jacobiator}
    \mathcal J^a=\sum_{i<j<k}C^a_{ijk}e^i\wedge e^j\wedge e^k,\qquad\sum_a|\mathcal J^a|^2=\sum_a\sum_{i<j<k}|C^a_{ijk}|^2.
\end{equation}

Introduce the trace one-form
\[
    \vartheta:=\sum_b\iota_{b}\tau^b=\sum_j\left(\sum_bT^b_{bj}\right)e^j.
\]
By \eqref{eq:trace_T}, $\vartheta=0$. Since the $\tau^a$ have even degree, the contraction rule gives
\begin{equation}\label{eq: W_ab}
    \begin{aligned}
    \sum_b\iota_{e_b}W^{ab}&=\sum_b\iota_{e_b}(\tau^a\wedge\tau^b)\\
    &=\sum_b(\iota_{e_b}\tau^a)\wedge\tau^b+\tau^a\wedge\sum_b\iota_{e_b}\tau^b\\
    &=\sum_b\tau^b\wedge\iota_{e_b}\tau^a+\tau^a\wedge\vartheta=\mathcal J^a.
    \end{aligned}
\end{equation}

Let
\[
    \varepsilon_b(\alpha):=e^b\wedge\alpha.
\]
With respect to the Hermitian inner product induced by $h$ on forms, $\varepsilon_b^*=\iota_b$, and
\[
    \iota_c\varepsilon_b+\varepsilon_b\iota_c=\delta_{bc}\operatorname{Id}.
\]
Using \eqref{eq: W_ab}, adjointness, and the preceding anticommutation relation, we obtain
\begin{align}
    \sum_a|\mathcal J^a|^2 &=\sum_{a,b,c} \langle\iota_bW^{ab},\iota_cW^{ac}\rangle\notag\\
    &=\sum_{a,b,c} \langle W^{ab},\varepsilon_b\iota_cW^{ac}\rangle\notag\\
    &=\sum_{a,b}|W^{ab}|^2 -\sum_{a,b,c} \langle W^{ab},\iota_c\varepsilon_bW^{ac}\rangle\notag\\
    &=\sum_{a,b}|W^{ab}|^2-\sum_{a,b,c} \langle\varepsilon_cW^{ab},\varepsilon_bW^{ac}\rangle.\label{eq:W_ab_J}
\end{align}

For $\alpha,\beta\in\varOmega^r_xM$, the unitary Hodge
identity is
\begin{equation}\label{eq:Hodge_id}
    (\sqrt{-1})^{r^2}\alpha\wedge\bar\beta \wedge\frac{\omega^{n-r}}{(n-r)!}=\langle\alpha,\beta\rangle\frac{\omega^n}{n!}.
\end{equation}
Indeed, this follows by checking \eqref{eq:Hodge_id} on the orthonormal basis $\{e^{i_1}\wedge\cdots\wedge e^{i_r}\}_{i_1<\cdots<i_r}$.
Since
\[
    \Sigma^2=\sum_{a,b}W^{ab}\wedge\overline{W^{ab}},
\]
formula \eqref{eq:Hodge_id} with $r=4$ gives 
\begin{equation}\label{eq:Sigma_W_ab}
    \Sigma^2\wedge\frac{\omega^{n-4}}{(n-4)!}=\sum_{a,b}|W^{ab}|^2\frac{\omega^n}{n!}.
\end{equation}

Using the displayed formulas for $\partial\omega$ and $\bar\partial\omega$, and moving all holomorphic factors to the left, we get
\begin{align}
    &\sqrt{-1}\;\Sigma\wedge\partial\omega\wedge\bar\partial\omega\notag\\
    &\quad=-\sqrt{-1}\sum_{a,b,c}\varepsilon_cW^{ab}\wedge\overline{\varepsilon_bW^{ac}}.\label{eq:W_ab_wedge}
\end{align}
The minus sign in \eqref{eq:W_ab_wedge} is produced by moving $e^c$ past the three antiholomorphic one-forms occurring in $\bar\tau^a\wedge\bar e^b$. For $n>4$, applying \eqref{eq:Hodge_id} with $r=5$ to \eqref{eq:W_ab_wedge} yields
\begin{align}
    &\sqrt{-1}\,\Sigma\wedge\partial\omega\wedge\bar\partial\omega\wedge\frac{\omega^{n-5}}{(n-5)!}\notag\\
    &\quad=-\sum_{a,b,c}\langle\varepsilon_cW^{ab},\varepsilon_bW^{ac}\rangle\frac{\omega^n}{n!}.\label{eq:Sigma_W_ab_inner}
\end{align}
Combining \eqref{eq:Jacobiator}, \eqref{eq:W_ab_J}, \eqref{eq:Sigma_W_ab}, and \eqref{eq:Sigma_W_ab_inner}, we conclude that, for $n>4$,
\[
    \mathcal Q_n=\sum_a|\mathcal J^a|^2\frac{\omega^n}{n!}=\sum_{a=1}^n\sum_{i<j<k}|C^a_{ijk}|^2\frac{\omega^n}{n!}.
\]

If $n=4$, then $\varepsilon_cW^{ab}\in\varOmega^5_xM$. Thus the second sum on the right-hand side of \eqref{eq:W_ab_J} vanishes, and \eqref{eq:Jacobiator} and \eqref{eq:Sigma_W_ab} give
\[
    \mathcal Q_4=\Sigma^2=\sum_{a=1}^4\sum_{i<j<k}|C^a_{ijk}|^2\frac{\omega^4}{4!}.
\]

It remains to treat $n=3$. There is only one increasing triple. Expanding the definition of $C^a_{123}$ and using $T^a_{ij}=-T^a_{ji}$, we obtain
\[
\begin{aligned}
    C^a_{123}=&(T^1_{12}-T^3_{23})T^a_{13}-(T^1_{13}+T^2_{23})T^a_{12}\\
            &+(T^2_{12}+T^3_{13})T^a_{23}.
\end{aligned}
\]
On the other hand, \eqref{eq:trace_T}, for $j=1,2,3$, gives
\[
    T^2_{12}+T^3_{13}=0, \qquad T^1_{12}-T^3_{23}=0, \qquad T^1_{13}+T^2_{23}=0.
\]
All three coefficients in the preceding expression vanish. Therefore $C^a_{123}=0$ for every $a$.
\end{proof}

\begin{theorem}
    Let $(M,h)$ be a compact CKL manifold. Then, for every $x\in M$, the Chern torsion algebra $\mathfrak A_{x,\nabla}$ is a complex Lie algebra.
\end{theorem}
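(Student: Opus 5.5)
Since the bracket $[u,v]_\nabla=T_\nabla(u,v)$ is already complex bilinear and skew, the whole content of the statement is the Jacobi identity. In a unitary frame the Jacobiator of $e_i,e_j,e_k$ is exactly $\sum_a C^a_{ijk}e_a$, up to a sign convention. So the plan is to show $C^a_{ijk}\equiv0$ on $M$. The tool is Lemma~\ref{lemma:Q_n}, which writes the pointwise norm $\sum_a\sum_{i<j<k}|C^a_{ijk}|^2$ as the coefficient of a top-degree form $\mathcal Q_n$. We then prove $\int_M\mathcal Q_n=0$ using Proposition~\ref{prop:basic_identities}. Since the integrand is a nonnegative multiple of the volume form, it must vanish pointwise.

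The small dimensions are handled directly. For $n\le 2$ there are no triples $i<j<k$, so the Jacobi identity holds trivially; for $n=2$ it is also automatic for any skew bracket. For $n=3$, Lemma~\ref{lemma:Q_n} already gives $C^a_{123}=0$ from the trace condition \eqref{eq:trace_T}. By multilinearity and skew-symmetry of the Jacobiator, this is the whole Jacobi identity.

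For $n\ge4$ we show that $\mathcal Q_n$ is exact, writing $\Sigma=\sqrt{-1}\partial\bar\partial\omega$ by \eqref{eq:wedge_of_tau}.
\begin{itemize}
\item For $n=4$, $\Sigma^2=-\partial\bar\partial\omega\wedge\partial\bar\partial\omega=-d(\bar\partial\omega\wedge\partial\bar\partial\omega)$, since $\partial\bar\partial\omega$ is $d$-closed.
\item For $n>4$, set $\beta=\omega^{n-4}/(n-4)!$, so that $d\beta=\beta'\wedge d\omega$ with $\beta'=\omega^{n-5}/(n-5)!$. Integrate by parts once:
\[
\int\Sigma^2\wedge\beta=\sqrt{-1}\int\partial\bar\partial\omega\wedge\Sigma\wedge\beta=\pm\sqrt{-1}\int\bar\partial\omega\wedge\Sigma\wedge\partial\beta.
\]
Here we use $d\Sigma=0$ and keep only the bidegree-$(n,n)$ part, and $\partial\beta=\beta'\wedge\partial\omega$.
\end{itemize}
This produces exactly $-\sqrt{-1}\int\Sigma\wedge\partial\omega\wedge\bar\partial\omega\wedge\beta'$, after reordering the odd forms. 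Hence $\int_M\mathcal Q_n=0$; the relative sign in the definition of $\mathcal Q_n$ was designed for this cancellation.

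It follows that $\sum|C^a_{ijk}|^2\,\omega^n/n!$ integrates to zero. Being a nonnegative continuous density, it vanishes identically, so $C\equiv0$ at every point and $\mathfrak A_{x,\nabla}$ is a Lie algebra for every $x$.

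The main obstacle is the sign bookkeeping in the $n>4$ integration by parts. One must check carefully that the boundary-free Stokes computation gives exactly the negative of the second term of $\mathcal Q_n$, and not the same sign. I would verify this by tracking degrees: $\bar\partial\omega$ is a $3$-form, $\Sigma$ is a $4$-form, and $\omega$ is even. As an independent check, I would compare the sign with \eqref{eq:W_ab_wedge} on a simple model, such as a nilpotent Lie algebra with an explicit unitary coframe.
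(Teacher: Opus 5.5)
Your proposal is correct and follows essentially the paper's proof: your primitive $\bar\partial\omega\wedge\Sigma\wedge\omega^{n-4}/(n-4)!$ is exactly $\bar\partial\bigl(\Sigma\wedge\omega^{n-3}/(n-3)!\bigr)$, and the paper writes $\mathcal Q_n=\sqrt{-1}\partial\bar\partial\bigl(\Sigma\wedge\omega^{n-3}/(n-3)!\bigr)$ (and $\mathcal Q_4=\sqrt{-1}\partial\bar\partial(\omega\wedge\Sigma)$, which agrees with your $-d(\bar\partial\omega\wedge\partial\bar\partial\omega)$), so the two Stokes arguments coincide. The sign you flagged does work out: the Stokes step carries a $+$ sign, $\int d\bar\partial\omega\wedge\Sigma\wedge\beta=+\int\bar\partial\omega\wedge\Sigma\wedge\beta'\wedge\partial\omega$, and the swap of the odd $3$-forms $\bar\partial\omega,\partial\omega$ supplies the minus, giving the pointwise identity $\mathcal Q_n=\sqrt{-1}\,d\bigl(\bar\partial\omega\wedge\Sigma\wedge\beta\bigr)$.
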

\begin{proof}
Let $\{e_i\}_{i=1}^n$ be a unitary basis of $T^{1,0}_xM$ and write
\[
    T_\nabla(e_i,e_j)=\sum_aT^a_{ij}e_a.
\]
The Jacobiator is
\[
\begin{aligned}
    J_\nabla(e_i,e_j,e_k):=&[[e_i,e_j]_\nabla,e_k]_\nabla+[[e_j,e_k]_\nabla,e_i]_\nabla\\
        &+[[e_k,e_i]_\nabla,e_j]_\nabla .
\end{aligned}
\]
A direct calculation gives
\[
    J_\nabla(e_i,e_j,e_k) = \sum_a C^a_{ijk}e_a,
\]
where
\[
    C^a_{ijk} = \sum_b\left(T^b_{ij}T^a_{bk}+T^b_{jk}T^a_{bi}+T^b_{ki}T^a_{bj}\right).
\]
Thus it suffices to prove
\[
    C^a_{ijk}=0
\]
for all $a,i,j,k$.

If $n\leq2$, then
\[
    \wedge^3T^{1,0}_xM=0,
\]
so every skew-symmetric complex bilinear bracket automatically satisfies the Jacobi identity.

Suppose next that $n=3$.  Since every compact CKL metric is balanced, its Chern torsion is trace-free:
\[
    \sum_aT^a_{aj}=0.
\]
By Lemma~\ref{lemma:Q_n}, this implies
\[
    C^a_{123}=0
\]
for every $a$.  By skew-symmetry, these are all the possibly nonzero components of the Jacobiator.  Hence the Jacobi identity holds when $n=3$.

It remains to consider $n\geq4$.  Set
\[
    \Sigma=\sum_a\tau^a\wedge\overline{\tau^a}.
\]
By the basic CKL identities,
\[
    \Sigma = \sqrt{-1}\,\partial\bar\partial\omega.
\]
In particular,
\[
    \partial\Sigma=0, \qquad \bar\partial\Sigma=0.
\]

When $n=4$, we have
\[
    \mathcal Q_4=\Sigma^2.
\]
Since $\Sigma=\sqrt{-1}\partial\bar\partial\omega$ and $d\Sigma=0$,
\[
\begin{aligned}
    \mathcal Q_4&=\sqrt{-1}\partial\bar\partial\omega\wedge\Sigma\\
                &=\sqrt{-1}\partial\bar\partial(\omega\wedge\Sigma).
\end{aligned}
\]
Therefore \(\mathcal Q_4\) is $\partial\bar\partial$-exact.  By Stokes' theorem,
\[
    \int_M\mathcal Q_4=0.
\]

Now suppose that $n>4$.  Since $\partial\Sigma=\bar\partial\Sigma=0$, we have
\[
    \sqrt{-1}\partial\bar\partial\left(\Sigma\wedge\frac{\omega^{n-3}}{(n-3)!}\right)=\sqrt{-1}\Sigma\wedge\frac{\partial\bar\partial\omega^{n-3}}{(n-3)!}.
\]
For every positive integer $m$,
\[
    \partial\bar\partial\omega^m=m\,\partial\bar\partial\omega\wedge\omega^{m-1}+m(m-1)\,\partial\omega\wedge\bar\partial\omega\wedge\omega^{m-2}.
\]
Taking $m=n-3$ gives
\[
\begin{aligned}
    &\sqrt{-1}\partial\bar\partial\left(\Sigma\wedge\frac{\omega^{n-3}}{(n-3)!}\right)\\
    &\quad=\sqrt{-1}\Sigma\wedge\partial\bar\partial\omega\wedge\frac{\omega^{n-4}}{(n-4)!}\\
    &\qquad+\sqrt{-1}\Sigma\wedge\partial\omega\wedge\bar\partial\omega\wedge\frac{\omega^{n-5}}{(n-5)!}.
\end{aligned}
\]
Using
\[
    \sqrt{-1}\partial\bar\partial\omega=\Sigma,
\]
we obtain
\[
\begin{aligned}
    \sqrt{-1}\partial\bar\partial\left(\Sigma\wedge\frac{\omega^{n-3}}{(n-3)!}\right)=&\Sigma^2\wedge\frac{\omega^{n-4}}{(n-4)!}\\
    &+\sqrt{-1}\Sigma\wedge\partial\omega\wedge\bar\partial\omega\wedge\frac{\omega^{n-5}}{(n-5)!}\\
    =&\mathcal Q_n.
\end{aligned}
\]
Thus $\mathcal Q_n$ is $\partial\bar\partial$-exact for every $n>4$.  Since $M$ is compact and has no boundary, Stokes'
theorem yields
\[
    \int_M\mathcal Q_n=0.
\]

On the other hand, Lemma~\ref{lemma:Q_n} gives
\[
    \mathcal Q_n =\sum_{a=1}^n\sum_{i<j<k} |C^a_{ijk}|^2\frac{\omega^n}{n!}.
\]
Consequently,
\[
    0 = \int_M\mathcal Q_n = \int_M \sum_{a=1}^n\sum_{i<j<k} |C^a_{ijk}|^2\frac{\omega^n}{n!}.
\]
The integrand is a smooth nonnegative function.  It must therefore vanish identically:
\[
    C^a_{ijk}=0
\]
for every $a,i,j,k$ and at every point of $M$.

Hence
\[
    J_\nabla(u,v,w)=0
\]
for all $u,v,w\in T^{1,0}_xM$.  The bracket
\[
    [u,v]_\nabla=T_\nabla(u,v)
\]
is complex bilinear, skew-symmetric, and satisfies the Jacobi identity.  Therefore $\mathfrak A_{x,\nabla}$ is a complex Lie algebra.
\end{proof}

\section{Pure Pl\"{u}cker power vanishing}

	For a $(p,0)$-form $\eta$, we say it is \emph{pointwise decomposable} if it is a wedge of $p$ covectors in $\varOmega_xM$ for each $x$ and is \emph{decomposable} if it is $p\,$-th wedge product of $(1,0)$-forms.
	
	We define
	\[
	\mathcal{T}\eta := \sum_a \tau_a \wedge \iota_{a}\eta,
	\]
	where $\iota_{a}$ is the interior product with the vector $e_a$.
	And there is semipositive $(1,1)$-form and $(n-1,n-1)$-form
	\[
	\beta_\eta:=\Lambda_\omega^{p-1}\left(\frac{(\sqrt{-1})^{p^2}}{p!}\eta\wedge\bar{\eta}\right),\quad \Psi_\eta:=(\sqrt{-1})^{p^2}\eta\wedge\bar{\eta}\wedge\frac{\omega^{n-p-1}}{(n-p-1)!},
	\]
	where $\Lambda_\omega$ is the adjoint of Lefschetz operator $L_\omega (\cdot):=\omega\wedge\cdot$ with respect to the Hermitian metric on the space of forms, which we denote by $\langle\alpha,\beta\rangle$ such that $\langle\alpha,\beta\rangle\frac{\omega^n}{n!}=\alpha\wedge\ast\bar\beta$.
	Note that \[\iota_{a}\eta=-\sqrt{-1}\Lambda_\omega(\bar{e}^a\wedge\eta),\quad\forall~ (p, 0) \text{-form }\eta,\] we have
	\[\Lambda_\omega(\partial\omega\wedge\eta)=-\sum_{a}\iota_{a}(\tau^a\wedge\eta)=-\mathcal{T}\eta, \]since $\tau$ is traceless. This formula also implies that $\mathcal{T}\eta$ is a global $(p+1)\,$-form and $\mathcal{T}\eta=0$ when $p\ge n-1$.
	
\begin{lemma}\label{prop:decomp-square}
		Let $(M,h)$ be a compact Chern--K\"ahler-like manifold, if $\eta\in H^0\!\left(M,\varOmega^pM\right)$, i.e. $\eta$ is a holomorphic $(p,0)$-form, $1\le p\le n-1$, then
		\[
		\sqrt{-1}\partial\bar{\partial}\Psi_\eta=
		\sqrt{-1}\partial\bar{\partial}\left( \sqrt{-1}^{p^2}\eta\wedge\bar{\eta}\wedge\frac{\omega^{n-p-1}}{(n-p-1)!}\right)=|\partial\eta-\mathcal{T}\eta|^2\frac{\omega^n}{n!}. 
		\]
	\end{lemma}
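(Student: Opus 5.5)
The plan is to expand $\sqrt{-1}\partial\bar\partial\Psi_\eta$ by the Leibniz rule and convert each resulting top-degree form into a pointwise inner product, as in the proof of Lemma~\ref{lemma:Q_n}. Throughout, write $m=n-p-1$ and fix a point $x$ with a Chern-normal unitary frame there.

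\textbf{Step 1: expansion.} Since $\eta$ is holomorphic, $\bar\partial\eta=0$ and $\partial\bar\eta=\overline{\bar\partial\eta}=0$. Hence
\[
\partial\bar\partial\bigl(\eta\wedge\bar\eta\wedge\omega^m\bigr)
\]
splits into four groups:
\begin{itemize}
\item the term $\partial\eta\wedge\overline{\partial\eta}\wedge\omega^m$;
\item the two cross terms $\partial\eta\wedge\bar\eta\wedge\bar\partial\omega^m$ and $\eta\wedge\overline{\partial\eta}\wedge\partial\omega^m$, which are complex conjugates up to sign;
\item the term $\eta\wedge\bar\eta\wedge\partial\bar\partial\omega^m$.
\end{itemize}
For the last group I use the identity from the proof of the Lie-algebra theorem,
\[
\partial\bar\partial\omega^m=m\,\partial\bar\partial\omega\wedge\omega^{m-1}+m(m-1)\,\partial\omega\wedge\bar\partial\omega\wedge\omega^{m-2},
\]
together with $\sqrt{-1}\partial\bar\partial\omega=\Sigma$ from Proposition~\ref{prop:basic_identities}. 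After dividing by $m!$, this leaves a $\Sigma$-term with $\omega^{m-1}/(m-1)!$ and a $\partial\omega\wedge\bar\partial\omega$-term with $\omega^{m-2}/(m-2)!$. These are exactly the two pieces that appear in $\mathcal Q_n$.

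\textbf{Step 2: converting to inner products.} I will apply the unitary Hodge identity \eqref{eq:Hodge_id} in degrees $r=p+1$ and $r=p+2$, using the frame formulas $\partial\omega=\sqrt{-1}\sum_b\tau^b\wedge\bar e^b$ and $\bar\partial\omega=-\sqrt{-1}\sum_c e^c\wedge\bar\tau^c$.
\begin{itemize}
\item The first group gives $|\partial\eta|^2\,\omega^n/n!$.
\item The $\Sigma$-term gives $\sum_a|\tau^a\wedge\eta|^2$.
\item The $\partial\omega\wedge\bar\partial\omega$-term gives $-\sum_{b,c}\langle\varepsilon_c(\tau^b\wedge\eta),\varepsilon_b(\tau^c\wedge\eta)\rangle$.
\end{itemize}
For the last two, I repeat the computation \eqref{eq:W_ab_J} with $W^{ab}$ replaced by $\tau^b\wedge\eta$. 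The anticommutation relation $\iota_c\varepsilon_b+\varepsilon_b\iota_c=\delta_{bc}$ then turns their sum into $\bigl|\sum_b\iota_b(\tau^b\wedge\eta)\bigr|^2$. The contraction rule and tracelessness $\vartheta=0$ give $\sum_b\iota_b(\tau^b\wedge\eta)=\sum_b\tau^b\wedge\iota_b\eta=\mathcal T\eta$. So these two terms combine to $|\mathcal T\eta|^2$, just as the two pieces of $\mathcal Q_n$ combine to $\sum_a|\mathcal J^a|^2$.

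For the cross terms, $\bar\partial\omega\wedge\omega^{m-1}/(m-1)!$ is paired against $\partial\eta\wedge\bar\eta$. The Hodge identity in degree $p+1$, together with $\Lambda_\omega(\partial\omega\wedge\eta)=-\mathcal T\eta$ (equivalently $\iota_a\eta=-\sqrt{-1}\Lambda_\omega(\bar e^a\wedge\eta)$), should convert each cross term to $-\langle\partial\eta,\mathcal T\eta\rangle$ or its conjugate. Summing all contributions then yields
\[
|\partial\eta|^2-2\operatorname{Re}\langle\partial\eta,\mathcal T\eta\rangle+|\mathcal T\eta|^2=|\partial\eta-\mathcal T\eta|^2 .
\]

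\textbf{Edge cases.} When $m=0$ or $m=1$, the terms containing $\omega^{m-1}$ or $\omega^{m-2}$ are absent. In those cases I must check that the corresponding inner products vanish for degree reasons, as in the $n=4$ case of Lemma~\ref{lemma:Q_n}. For example, when $m=1$ the form $\varepsilon_c(\tau^b\wedge\eta)$ has degree $p+3>n$ and so vanishes. When $m=0$, i.e.\ $p=n-1$, one also has $\mathcal T\eta=0$, and the corresponding identities should degenerate consistently.

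\textbf{Main obstacle.} I expect the hardest part to be the sign bookkeeping. This includes the powers $(\sqrt{-1})^{p^2}$ versus $(\sqrt{-1})^{(p+1)^2}$ and $(\sqrt{-1})^{(p+2)^2}$, the signs from commuting $e^c$, $\bar e^b$ past $\eta$ and $\bar\eta$, and the normalization $m\cdot m!$ versus $(m-1)!$. These must all conspire to give exactly $+|\partial\eta|^2$, $+|\mathcal T\eta|^2$, and $-2\operatorname{Re}\langle\partial\eta,\mathcal T\eta\rangle$. I would first verify each sign on a single monomial $\eta=e^1\wedge\cdots\wedge e^p$ with a single nonzero torsion component, and then trust the multilinear extension.
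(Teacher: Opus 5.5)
Your proposal is correct. The overall architecture matches the paper's: the four-group expansion, the identity $\sqrt{-1}\partial\bar\partial\omega=\Sigma$, and the identification of the pieces with $|\partial\eta|^2$, $-2\operatorname{Re}\langle\partial\eta,\mathcal T\eta\rangle$ and $|\mathcal T\eta|^2$. The key step, however, is carried out differently.

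The paper works with the mixed-type $(p+2,1)$-form $\partial\omega\wedge\eta$. It quotes, without derivation, a Hodge-star formula for forms with $\Lambda_\omega^2\xi=0$, obtained from the Lefschetz decomposition. It then computes $|\partial\omega\wedge\eta|^2$ in two ways:
\begin{enumerate}
\item by that formula together with $\Lambda_\omega(\partial\omega\wedge\eta)=-\mathcal T\eta$, which yields the $\partial\omega\wedge\bar\partial\omega$-term plus $|\mathcal T\eta|^2$;
\item via $\partial\omega=\sqrt{-1}\sum_a\tau^a\wedge\bar e^a$, which yields $\sum_a|\tau^a\wedge\eta|^2$, i.e.\ the $\Sigma$-term.
\end{enumerate}
The cross term is handled by pairing $\partial\omega\wedge\eta$ with $\omega\wedge\partial\eta$.

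You instead stay entirely among $(r,0)$-forms. You use only \eqref{eq:Hodge_id} and the relation $\iota_c\varepsilon_b+\varepsilon_b\iota_c=\delta_{bc}$, applied to $V^b=\tau^b\wedge\eta$ with $\sum_b\iota_bV^b=\mathcal T\eta$. The signs do work out: the $\Sigma$-term is $\sum_b|V^b|^2\,\omega^n/n!$, and the $\partial\omega\wedge\bar\partial\omega$-term is exactly $-\sum_{b,c}\langle\varepsilon_cV^b,\varepsilon_bV^c\rangle\,\omega^n/n!$.

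Your route is more elementary, since it avoids the mixed-type Hodge-star formula. It also shows that the pointwise identity behind Lemma~\ref{lemma:Q_n} is the special case $\eta=\tau^a$, summed over $a$, because $\mathcal J^a=\mathcal T\tau^a$. The paper's route is more conceptual: it ties both the cross term and $|\mathcal T\eta|^2$ to the single form $\Lambda_\omega(\partial\omega\wedge\eta)$.

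Two small corrections.
\begin{enumerate}
\item The $\partial\omega\wedge\bar\partial\omega$-term needs \eqref{eq:Hodge_id} in degree $r=p+3$, not only in degrees $p+1$ and $p+2$. Your $m=1$ remark already uses this implicitly.
\item For the cross terms, the degree-$(p+1)$ route through $\Lambda_\omega$ that you sketch would require exactly the mixed-type Hodge star you otherwise avoid. Within your framework, substitute $\bar\partial\omega=-\sqrt{-1}\sum_ce^c\wedge\bar\tau^c$ and apply \eqref{eq:Hodge_id} in degree $p+2$. This turns $\sqrt{-1}^{p^2+1}\partial\eta\wedge\bar\eta\wedge\bar\partial\omega\wedge\omega^{m-1}/(m-1)!$ directly into $-\sum_c\langle\varepsilon_c\partial\eta,\tau^c\wedge\eta\rangle\,\omega^n/n!=-\langle\partial\eta,\mathcal T\eta\rangle\,\omega^n/n!$. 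The other cross term is its complex conjugate.
\end{enumerate}
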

	\begin{proof}
		For any $(p,q)$ form $\xi$ satisfying $\Lambda^2_\omega\xi=0, p+q\le n$ , we have the Lefschetz decomposition \[\xi=\left(\xi-\frac{L_\omega\Lambda_\omega\xi}{n-p-q+2}\right)+\frac{\Lambda_\omega\xi}{n-p-q+2}\wedge\omega\]
		and the Hodge-star calculation gives 
		\[\ast\xi=\sqrt{-1}^{(p+q)^2+2p}\left(\frac{\omega^{n-p-q}\wedge\xi}{(n-p-q)!}-\frac{\omega^{n-p-q+1}\wedge\Lambda_\omega\xi}{(n-p-q+1)!}\right).\]
		If we make the convention that $\frac{\omega^{n-p-q}}{(n-p-q)!}=0$ when $n-p-q<0$, then the equality holds when $p+q=n+1$, since there is no primitive $(n+1)$ form and $\xi=L_\omega\Lambda_\omega\xi$ under this condition.
		Thus there are calculations
		
		\begin{align}
			|\partial\eta|^2\frac{\omega^n}{n!}&=\sqrt{-1}^{(p+1)^2}\frac{\partial\eta\wedge\overline{\partial\eta}\wedge\omega^{n-p-1}}{(n-p-1)!}, \\
			-\langle\mathcal{T\eta},\partial\eta\rangle\frac{\omega^n}{n!}&=\langle\partial\omega\wedge\eta,\omega\wedge\partial\eta\rangle\frac{\omega^n}{n!}=\sqrt{-1}^{(p+1)^2}\partial\omega\wedge\eta\wedge\frac{\overline{\partial\eta}\wedge\omega^{n-p-2}}{(n-p-2)!}, \\
			|\partial\omega\wedge\eta|^2\frac{\omega^n}{n!}&=\sqrt{-1}^{(p+3)^2+2}\partial\omega\wedge\eta\wedge\left(\frac{\bar\partial\omega\wedge\bar\eta\wedge\omega^{n-p-3}}{(n-p-3)!}-\frac{\overline{\mathcal{T}\eta}\wedge\omega^{n-p-2}}{(n-p-2)!}\right) \notag \\
			&=-\sqrt{-1}^{(p+1)^2}\frac{\partial\omega\wedge\eta\wedge\bar\partial\omega\wedge\bar\eta\wedge\omega^{n-p-3}}{(n-p-3)!}+|\mathcal{T}\eta|^2\frac{\omega^n}{n!}.
		\end{align}
		Note that when $p\ge n-1$, $\partial\omega\wedge\eta=0$ and $\mathcal{T}\eta=0$; when $p=n-2$, the corresponding term of $\frac{\omega^{n-p-3}}{(n-p-3)!}$ is interpreted as 0 by the convention above and is absent in $\partial\bar{\partial}\Psi_\eta$.
		Meanwhile, we have
		\begin{equation}
			\begin{aligned}
			|\partial\omega\wedge\eta|^2\frac{\omega^n}{n!}&=|\Sigma_{a}\tau^a\wedge\overline{e^a}\wedge\eta|^2\frac{\omega^n}{n!}=\sum_{a}|\tau^a\wedge\eta|^2\frac{\omega^n}{n!}\\
			&=\sqrt{-1}^{(p+2)^2}\sum_{a}\tau^a\wedge\eta\wedge\frac{\overline{\tau^a}\wedge\bar\eta\wedge\omega^{n-p-2}}{(n-p-2)!} \\
			&=\sqrt{-1}^{p^2+1}\eta\wedge\bar\eta\wedge\frac{\partial\bar\partial\omega\wedge\omega^{n-p-2}}{(n-p-2)!}.
			\end{aligned}
		\end{equation}
		Combining the four equations above, we get
		\[
		\partial\bar{\partial}\left(\sqrt{-1}^{p^2+1}\eta\wedge\bar{\eta}\wedge\frac{\omega^{n-p-1}}{(n-p-1)!}\right)=
		|\partial\eta|^2\frac{\omega^n}{n!}-2\text{Re}\langle\mathcal{T\eta},\partial\eta\rangle\frac{\omega^n}{n!}+|\mathcal{T}\eta|^2\frac{\omega^n}{n!}
		= |\partial\eta-\mathcal{T}\eta|^2\frac{\omega^n}{n!}.
		\]
    \end{proof}

	For a $(p,0)$-form $\eta$, $\beta_\eta$ is a semipositive $(1,1)$-form, if its eigenvalues are $b_i\ge0$, we have
	\begin{equation}\label{eq:bochner-beta}
		\langle\sqrt{-1}\partial\bar\partial|\eta^2|,\beta_\eta\rangle
		=\sum_i b_i|\nabla'_{e_i}\eta|^2+p\sum_{i,j}R_{i\bar i j\bar j}b_i b_j
		=\sum_i b_i|\nabla'_{e_i}\eta|^2+p\text{RBC}_h(\{e_i\},\{b_i\})
	\end{equation}
	where $\{e_i\}$ is the corresponding eigenbasis. It is a corollary of the Bochner-type formula used in \cite[Lemma~2.2]{Tang2026}.
	
	Also, use a similar calculation as the lemma above, we have for all $(1,1)$ form $\alpha$,
	\begin{equation}\label{eq:hodge-1}
		\sqrt{-1}^{p^2}\alpha\wedge\eta\wedge\bar\eta\frac{\omega^{n-p-1}}{(n-p-1)!}=\left(\tr_\omega\alpha\cdot|\eta|^2-p\langle\alpha,\beta_\eta\rangle\right)\frac{\omega^n}{n!}.
	\end{equation}

\begin{theorem}\label{thm:plucker_vanishing}
	Let $(M,h)$ be a compact Chern--K\"ahler-like manifold,	with strongly quasi-positive real bisectional curvature, and let
	\[
	A\in H^0\!\left(M,\Sym^m(\varOmega^pM)\right),
	\qquad 1\le p\le n,\quad m\ge1.
	\]
	Suppose that there is a dense open set $U$ such that $\forall x\in U$, 
	\[
	A_x=\left(\alpha_1\wedge\cdots\wedge\alpha_p\right)^{\odot m},\quad\exists\alpha_1,\cdots,\alpha_p\in \varOmega_xM.
	\]
	Then $A=0$.
\end{theorem}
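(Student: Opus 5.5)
The plan is to reduce to a holomorphic pointwise-decomposable $p$-form (at least locally), run the Bochner formula \eqref{eq:bochner-beta} against the weight $\beta_\eta$, and close the argument with Lemma~\ref{prop:decomp-square} and Stokes' theorem. The quantity to integrate is not $\sqrt{-1}\partial\bar\partial|A|^{2/m}$, which may be singular, but something smooth: the norm function $f:=|A|^{2/m}$ together with the semipositive $(1,1)$-form $\beta$. On $U$ we have $A_x=\eta_x^{\odot m}$ with $\eta_x$ decomposable. The form $\eta_x$ is determined up to an $m$-th root of unity, so $\beta_{\eta}$ and $|\eta|^2$ are well defined globally on $U$. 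Locally near points of $U$ where $A\neq0$, holomorphicity of $A$ lets us pick a holomorphic branch $\eta$ with $\eta^{\odot m}=A$: the $m$-th root of a nonvanishing holomorphic section of the line of decomposable tensors is holomorphic. If $p=n$, then $A$ is a section of $K_M^{-\,\cdot}$-dual, i.e.\ of $K_M^{m}$, and I would instead use $H^{n,0}$-type vanishing directly via \eqref{eq:bochner-beta} with $\beta_\eta=|\eta|^2\omega$. So assume $1\le p\le n-1$.

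\textbf{Step 1 (local pointwise inequality).} For a local holomorphic decomposable $\eta=\alpha_1\wedge\cdots\wedge\alpha_p$, choose a unitary frame adapted to $\operatorname{span}\{\alpha_i\}$. Then $\beta_\eta=c\,|\eta|^2\sum_{i\le p}\sqrt{-1}e^i\wedge\bar e^i$, so its eigenvalues are $b_i=c|\eta|^2$ for $i\le p$ and $0$ otherwise. Formula \eqref{eq:bochner-beta} gives
\[
\langle\sqrt{-1}\partial\bar\partial|\eta|^2,\beta_\eta\rangle\ \ge\ p\,\RBC_h(\{e_i\},\{b_i\})\ \ge 0 .
\]
This is strictly positive at the point of strict positivity whenever $\eta\ne0$ there. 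Next I would combine this with \eqref{eq:hodge-1}, taking $\alpha=\sqrt{-1}\partial\bar\partial|\eta|^2$:
\[
\sqrt{-1}\partial\bar\partial|\eta|^2\wedge\sqrt{-1}^{p^2}\eta\wedge\bar\eta\wedge\tfrac{\omega^{n-p-1}}{(n-p-1)!}
=\bigl(\Delta|\eta|^2\cdot|\eta|^2-p\langle\sqrt{-1}\partial\bar\partial|\eta|^2,\beta_\eta\rangle\bigr)\tfrac{\omega^n}{n!}.
\]
Both sides are invariant under $\eta\mapsto\zeta\eta$ with $\zeta^m=1$, hence they are globally defined on $U$.

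\textbf{Step 2 (integration).} I would integrate suitable global quantities built from $\Psi_\eta$, which is well defined, smooth on $U$, and extends continuously, being a root of the smooth $A\wedge\bar A$-type tensor. By Lemma~\ref{prop:decomp-square}, $\sqrt{-1}\partial\bar\partial\Psi_\eta\ge0$. Since it is $\partial\bar\partial$-exact, integrating it over $M$ should give zero. This forces $\partial\eta=\mathcal T\eta$, and then $\int\sqrt{-1}\partial\bar\partial|\eta|^2\wedge\Psi_\eta=\int|\eta|^2\sqrt{-1}\partial\bar\partial\Psi_\eta=0$. Feeding this into Step 1, we get that $\int\bigl(\Delta|\eta|^2|\eta|^2-p\langle\cdot,\beta_\eta\rangle\bigr)$ vanishes, while $\int|\eta|^2\Delta|\eta|^2=-\int|\nabla|\eta|^2|^2$ up to torsion terms that vanish by balancedness. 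Hence $\langle\sqrt{-1}\partial\bar\partial|\eta|^2,\beta_\eta\rangle$ integrates to something $\le0$, while it is pointwise $\ge0$. Therefore it vanishes identically, so $\RBC(\{e_i\},\{b_i\})=0$ everywhere. At the strictly positive point this gives $\eta=0$ in a neighborhood, and holomorphicity and density of $U$ then give $A=0$.

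\textbf{Main obstacle.} The hard part is regularity. On the zero locus of $A$ and on $M\setminus U$, the root $\eta$ is only continuous ($|\eta|^2=|A|^{2/m}$ is Hölder), so the integrations by parts need justification. I would handle this by a regularization: replace $|\eta|^2$ by $(|A|^2+\varepsilon)^{1/m}$ and $\Psi_\eta$ by its analogous smoothing, check that the error terms carry favorable signs or tend to $0$ as $\varepsilon\to0$, and use that $A$'s zero set is analytic and $M\setminus U$ has measure zero. A secondary concern is verifying that the exponent and sign bookkeeping in Step 2 really produces a nonpositive total.
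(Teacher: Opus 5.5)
Your analytic core matches the paper's. Combining exactness of $\sqrt{-1}\partial\bar\partial\Psi_\eta$, the exchange $\int\sqrt{-1}\partial\bar\partial f\wedge\Psi_\eta=\int f\,\sqrt{-1}\partial\bar\partial\Psi_\eta$, the balanced identity $\int f\sqrt{-1}\partial\bar\partial f\wedge\frac{\omega^{n-1}}{(n-1)!}=-\int\sqrt{-1}\partial f\wedge\bar\partial f\wedge\frac{\omega^{n-1}}{(n-1)!}$, and \eqref{eq:hodge-1}, \eqref{eq:bochner-beta} for the signs gives exactly the paper's identity $0=\int E_f+\int B_{\widetilde\eta}+\int fS_{\widetilde\eta}$. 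All three terms are nonnegative, and $B_{\widetilde\eta}>0$ at the strong point.

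The genuine gap is the step you defer as the ``main obstacle''. This is precisely where the paper's proof does its real work. Across $Z=\{A=0\}$ the root $\eta$ is multivalued, and $f=|A|^{2/m}$ and $\Psi_\eta$ are only continuous. Each integration by parts therefore holds only if the distributional $\partial\bar\partial$ of $f$ and $\Psi_\eta$ puts no (negatively signed) mass on $Z$ and the boundary terms near $Z$ vanish. In particular, $\int_M\sqrt{-1}\partial\bar\partial\Psi_\eta=0$ does not give $\partial\eta=\mathcal T\eta$ on $M\setminus Z$ without this.

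Your sketch does not supply it.
\begin{itemize}
\item Replacing $f$ by $\phi(f)=(f^m+\varepsilon)^{1/m}$ is reasonable. Since $\phi$ is convex and increasing, $\langle\sqrt{-1}\partial\bar\partial\phi(f),\beta_\eta\rangle\ge\phi'(f)\langle\sqrt{-1}\partial\bar\partial f,\beta_\eta\rangle\ge0$. But it breaks $f=|\eta|^2$ in \eqref{eq:hodge-1} and creates an extra term $(\phi(f)-f)\,\tr_\omega\sqrt{-1}\partial\bar\partial\phi(f)$ with no sign.
\item There is no evident ``analogous smoothing'' of $\Psi_\eta$. The form $\Psi_\eta/|\eta|^2$ depends on the line $[\eta_x]$, which in general has no continuous extension across $Z$ (already for $A=(z_1\,dz_1+z_2\,dz_2)^{\odot2}$). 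So rescaling by a regularized norm is not even continuous, and mollifying destroys Lemma~\ref{prop:decomp-square}.
\end{itemize}
A direct proof on $M$ needs real estimates. For example, use cutoffs $\chi(|A|^2/\delta)$ together with local integrability of $|A|^{2s-2}|\nabla A|^2$ for $0<s\le1$, which follows from quasi-plurisubharmonicity of $|A|^{2s}$, to kill the boundary terms. None of this is in the proposal.

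Also, $M\setminus U$ need not have measure zero for an arbitrary dense open $U$. You need, as the paper observes, that the cone of $m$-th powers of decomposable forms is closed. Then $A_x$ is of pure form at every point, and the root is locally holomorphic on all of $M\setminus Z$.

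The paper sidesteps all of this with a different device:
\begin{itemize}
\item It resolves the meromorphic map $x\mapsto[\eta_x]\in\mathbb P(\varOmega^pM)$ to $\mu: X\to M$.
\item It takes the $m$-th root cover of $\mu^*A\in H^0(X,L^{\otimes m})$, where $L=\gamma^*\mathcal O_{\mathbb P(\varOmega^pM)}(-1)$.
\item It normalizes and desingularizes this to a compact manifold $Y$ with a generically finite surjection $F: Y\to M$. On $Y$ the root becomes a genuine holomorphic $p$-form $\widetilde\eta$.
\end{itemize}
Then $f$, $\Psi$, $E_f$, $B_{\widetilde\eta}$ and $S_{\widetilde\eta}$ are smooth on compact $Y$, and $d(F^*\omega)^{n-1}=0$. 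All the Stokes identities hold literally, and the pointwise formulas are needed only on the dense set $Y^\circ$ where $F$ is a local biholomorphism. You should either adopt that construction or actually carry out the cutoff estimates.

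For $p=n$ no root is needed. There $A\in H^0(M,K_M^m)$, and the Bochner formula with the Chern scalar curvature plus the Gauduchon property suffice, as in the paper.
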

\begin{proof}
	We divide the proof into five explicit steps.
	
	\smallskip
	\noindent\textbf{Step 1: the case $p=n$.}
	Then $A$ is a section of canonical bundle $K_M^m$.  The Chern scalar curvature is
	\[
	s_C=\sum_{i,j}R_{i\bar i j\bar j}=\RBC_h(e,(1,\dots,1)).
	\]
	It is nonnegative everywhere and positive on a nonempty open set.  Since
	$\omega$ is balanced, it is Gauduchon.  The integrated Bochner formula for
	$0\ne A\in H^0(M,K_M^m)$ is
	\[
	0=\int_M\left(|{\nabla'A}|^2+m\cdot s_C|A|^2\right)\frac{\omega^n}{n!}.
	\]
	Both summands are nonnegative.  A nonzero holomorphic section cannot vanish on an open set, so the second summand is positive somewhere, a contradiction.  Hence $A=0$.
	
	\smallskip
	For the rest of the proof, assume $p\le n-1$ and put $E=\varOmega^pM$.
	
	\smallskip
	\noindent\textbf{Step 2: construction of a compact root space.}
	Composing the Pl\"{u}cker embedding and the Veronese map pointwise gives us an injective bundle map 
	\[\mathrm{Gr}(p,\varOmega M)\hookrightarrow\mathbb{P}(E)\hookrightarrow\mathbb{P}(\Sym^mE).\]
	Its image is closed in $\mathbb{P}(\Sym^mE)$ and the corresponding pointwise affine cone $\mathcal{C}\subset\Sym^mE$ is also closed. By continuity, $A\in\mathcal{C}$. Thus on each point $x\in M$ where $A_x\neq0$, we can find a unique line $[\eta_x]\in\mathbb{P}(E_x)$ such that $[\eta_x^{\odot m}]=[A_x]\in\mathbb{P}(\Sym^mE)$. The Veronese map is an embedding; by viewing $\mathbb{P}(E)$ as a subvariety of $\mathbb{P}(\Sym^mE)$ and imposing the definition equations of $[A]$ (and probably their derivations), the closure of the graph of $x\mapsto[\eta_x]$ is a subvariety of $\mathbb{P}(\Sym^mE)$ (thus this is a meromorphic map). Resolving the closure of the graph, we obtain a compact connected complex manifold $X$, a modification $\mu:X\to M$ and a holomorphic map $\gamma:X\to\mathbb{P}(E)$ which is the `extension' of $x\mapsto[\eta_x]$ on $X$.
	Put \[L=\gamma^*\mathcal{O}_{\mathbb{P}(E)}(-1)\subset\mu^*E,\] and the section $\mu^*A$ takes values in $L^{\otimes m}$ on a dense open set and therefore everywhere, denote it by $s\in H^0(X,L^{\otimes m})$. The projection $\pi:L\to X$ induces a tautological section $t\in\pi^*L$, and the analytic subspace $Z=\{l\in L~|~t(l)^{\otimes m}=\pi^*s(l)\}$ is finite and flat over $X$, since locally it is defined by the monic equation $t^m-s=0$. Normalize $Z$, choose a connected component dominating $X$, and resolve its singularities, we obtain a compact connected complex manifold $Y$, a holomorphic map $\varphi:Y\to Z$ and a generically finite surjective holomorphic map $F:Y\to M$ (by composing all the modifications). The whole procession is illustrated by
		\[\begin{tikzcd}
		&                   & \pi^*L \arrow[d]   & L \arrow[d]        & E \arrow[d] \\
		Y \arrow[r, "\varphi"] & Z \arrow[r, hook] & L \arrow[r, "\pi"] & X \arrow[r, "\mu"] & M          
		\end{tikzcd}\]
	And since $L\subset\mu^*E$, we have
	\[\varphi^*t_{\restriction_Z}\in H^0(Y,F^*E),\quad (\varphi^*t_{\restriction_Z})^{\otimes m}=F^*A.\] 
	For normalization and desingularization, see \cite{Grauert1984} and \cite{BierstoneMilman1997}, and $F:Y\to M$ is surjective and is a covering map except for some ramification points and exceptional loci.
	So there is a natural holomorphic map $dF^{*\wedge p}:F^*E\to\varOmega^pY$, using which we define a genuine holomorphic form \[\widetilde\eta=(dF^{*\wedge p})(\varphi^*t_{\restriction_Z})\in H^0(Y,\varOmega^pY).\] Let $Y^\circ\subset Y$ be the dense open set on which $F$ is a local biholomorphism and $A\circ F\ne0$, where $dF^{*\wedge p}$ identifies $F^*E$ with $\varOmega^pY$, and $\widetilde{\eta}\neq0$. 
	
	\smallskip
	\noindent\textbf{Step 3: smooth global forms.}
	Set
	\[
	\Omega:=F^*\omega,
	\qquad f:=|\varphi^*t_{\restriction_Z}|_{F^*h}^2,
	\qquad
	\Psi:=\sqrt{-1}^{p^2}\widetilde\eta\wedge\overline{\widetilde\eta}
	\wedge\frac{\Omega^{n-p-1}}{(n-p-1)!}.
	\]
	These are smooth on all of $Y$. On $Y^\circ$, the metric $\Omega$ is positive and $f=|\widetilde{\eta}|_\Omega^2$; $\Omega$ may be semipositive rather than positive along $Y\backslash Y^\circ$, but no inverse metric is used in the global integration below. Define
	\begin{align}
		E_f&:=\sqrt{-1}\partial f\wedge\bar\partial f
		\wedge\frac{\Omega^{n-1}}{(n-1)!},\label{eq:Ef}\\
		B_{\widetilde\eta}&:=
		f\,\sqrt{-1}\partial\bar\partial f\wedge\frac{\Omega^{n-1}}{(n-1)!}
		-\sqrt{-1}\partial\bar\partial f\wedge\Psi,\label{eq:Betaform}\\
		S_{\widetilde\eta}&:=\sqrt{-1}\partial\bar\partial\Psi.\label{eq:Seta}
	\end{align}
	Set $\beta_{\widetilde{\eta}}=\Lambda^{p-1}_\Omega(\frac{\sqrt{-1}^{p^2}}{p!}\widetilde{\eta}\wedge\overline{\widetilde\eta})$, and all the pointwise calculations above still holds true on $Y^\circ$ when we substitute $\omega$ with $\Omega$. Therefore, \eqref{eq:hodge-1} and Lemma \ref{prop:decomp-square} give
	\begin{align}
		E_f&=|{\partial f}|^2\frac{\Omega^n}{n!},\label{eq:Ef-positive}\\
		B_{\widetilde\eta}&=
		p\langle\sqrt{-1}\partial\bar\partial f,\beta_{\widetilde\eta}\rangle
		\frac{\Omega^n}{n!},\label{eq:B-positive}\\
		S_{\widetilde\eta}&=
		|\partial\widetilde\eta-\mathcal{T}\widetilde\eta|^2
		\frac{\Omega^n}{n!}.
		\label{eq:S-positive}
	\end{align}
	
	On $Y^\circ$, $\Omega$ is a positive CKL metric, and by \eqref{eq:bochner-beta} and nonnegative RBC, all three top forms are nonnegative on $Y^\circ$. 
	
	\smallskip
	\noindent\textbf{Step 4: the compact Stokes identity.}
	Since $\omega$ is balanced,
	\[\dif\Omega^{n-1}=F^*(\dif\omega^{n-1})=0.\]
	By the Stokes identity,
	\[
	\int_Y f\,\sqrt{-1}\partial\bar\partial f
	\wedge\frac{\Omega^{n-1}}{(n-1)!}
	=-\int_Y\sqrt{-1}\partial f\wedge\bar\partial f
	\wedge\frac{\Omega^{n-1}}{(n-1)!}
	=-\int_YE_f,
	\]
	\[
	\int_Y\sqrt{-1}\partial\bar\partial f\wedge\Psi
	=\int_Y f\,\sqrt{-1}\partial\bar\partial\Psi
	=\int_YfS_{\widetilde\eta}.
	\]
	Substitution into \eqref{eq:Betaform} yields
	\begin{equation}\label{eq:stokes-master}
		0=\int_YE_f+\int_YB_{\widetilde\eta}+\int_YfS_{\widetilde\eta}.
	\end{equation}
	Every form on the right is nonnegative on $Y^\circ$, thus all three integrals vanish. On $Y^\circ$, \eqref{eq:Ef-positive} gives $\dif{f}=0$. The complement of a proper analytic subset in a connected complex manifold is connected, so $f$ is constant on $Y^\circ$, hence on $Y$ by continuity. If $A\ne0$, this constant is positive, thus 
	\[f\equiv C>0, \quad\widetilde\eta\neq0, \quad fS_{\widetilde\eta}=0\Rightarrow S_{\widetilde\eta}=0.\]
	
	\smallskip
	\noindent\textbf{Step 5: contradiction at the strong point.}
	Strict RBC positivity at the strong point persists on a neighborhood, this neighborhood intersects with the dense set $F(Y^{\circ})\subset M$ since $F$ is surjective. Choose $y\in Y^\circ$ inside the intersection. Since $\widetilde{\eta}\neq0$, the eigenvalues $b_i$ of $\beta_{\widetilde\eta}$ are nonnegative and not all zero. Formula \eqref{eq:bochner-beta} is then strictly positive at $y$, so by \eqref{eq:B-positive},
	$B_{\widetilde\eta}(y)>0$.  This contradicts the already proved identity
	$B_{\widetilde\eta}\equiv0$.  Hence $A\equiv0$.
	
\end{proof}

\begin{corollary}[Vanishing of holomorphic symmetric $2$-forms]\label{cor:sym2-vanish}
Let $(M, h)$ be a compact Chern--K\"{a}hler-like manifold. Under the assumption of strongly quasi-positive real bisectional curvature,
\[
 H^0(M,\Sym^2\varOmega M)=0.
\]
Consequently, if $L_u(v):=T_\nabla(u,v)$, then
\begin{equation}\label{eq:killing-zero}
 \kappa_\nabla(u,v):=\tr(L_uL_v)=0.
\end{equation}
\end{corollary}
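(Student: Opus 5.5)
The plan is to reduce a general holomorphic symmetric $2$-form to the pure Pl\"ucker case of Theorem~\ref{thm:plucker_vanishing}, by passing to its top nonvanishing exterior power. The Killing form statement will then follow from holomorphicity of the torsion.

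\textbf{Setup.} Let $A\in H^0(M,\Sym^2\varOmega M)$ and suppose $A\neq 0$. View $A_x$ as a symmetric bilinear form on $T^{1,0}_xM$. Let $r\ge 1$ be the maximal rank of $A_x$ over $x\in M$. The set where the rank equals $r$ is the complement of the common zero locus of the $r\times r$ minors of $A$, which is a proper analytic subset of the connected manifold $M$. Hence this set is a dense open set $U$.

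\textbf{The induced section.} Define $\wedge^rA\in H^0(M,\Sym^2(\varOmega^rM))$ by
\[
\wedge^rA(v_1\wedge\cdots\wedge v_r,\,w_1\wedge\cdots\wedge w_r)=\det\bigl(A(v_i,w_j)\bigr)_{i,j}.
\]
This is holomorphic because it is a polynomial in the coefficients of $A$, and it is symmetric.

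\textbf{Pure Pl\"ucker form on $U$.} Fix $x\in U$. A complex symmetric form of rank $r$ diagonalizes as $A_x=\sum_{k=1}^r\alpha_k\odot\alpha_k$ with $\alpha_1,\dots,\alpha_r$ linearly independent in $\varOmega_xM$. Then the matrix $(A(v_i,w_j))$ factors as $PQ^{T}$, where $P_{ik}=\alpha_k(v_i)$ and $Q_{jk}=\alpha_k(w_j)$ are $r\times r$ matrices. Therefore
\[
\det\bigl(A(v_i,w_j)\bigr)=\det P\,\det Q=(\alpha_1\wedge\cdots\wedge\alpha_r)(v_1,\dots,v_r)\,(\alpha_1\wedge\cdots\wedge\alpha_r)(w_1,\dots,w_r).
\]
Up to the normalization convention for $\odot$, this says $(\wedge^rA)_x=(\alpha_1\wedge\cdots\wedge\alpha_r)^{\odot 2}$, which is nonzero.

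\textbf{Contradiction.} The section $\wedge^rA$ satisfies the hypothesis of Theorem~\ref{thm:plucker_vanishing} with $p=r$ and $m=2$. Hence $\wedge^rA\equiv 0$, contradicting $(\wedge^rA)_x\ne0$ on $U$. So $A=0$.

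The only delicate point is the bookkeeping in identifying $\det$ of the Gram matrix with the symmetric square under the paper's $\odot$ normalization. Any constant factor that appears is harmless.

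\textbf{Killing form.} By Proposition~\ref{prop:basic_identities}, $T_\nabla$ is a holomorphic section of $\wedge^2\varOmega M\otimes T^{1,0}M$. Hence $u\mapsto L_u$ is a holomorphic bundle map $T^{1,0}M\to\End(T^{1,0}M)$. It follows that $\kappa_\nabla(u,v)=\tr(L_uL_v)$ is a holomorphic section of $\Sym^2\varOmega M$, where symmetry comes from $\tr(L_uL_v)=\tr(L_vL_u)$. By the first part, $\kappa_\nabla\equiv0$, which is \eqref{eq:killing-zero}.
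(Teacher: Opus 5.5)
Your proposal is correct and follows essentially the same route as the paper: pass to the $r$-th compound $\wedge^rA\in H^0(M,\Sym^2(\varOmega^rM))$ for the generic rank $r$, note that on the dense rank-$r$ locus it is a pure square of a decomposable $r$-form, and invoke Theorem~\ref{thm:plucker_vanishing} to get a contradiction. The Killing form then vanishes because $T_\nabla$ is holomorphic and $\tr(L_uL_v)=\tr(L_vL_u)$. Your $\det P\,\det Q$ factorization and the minors argument for density just make explicit the ``top nonzero compound'' step that the paper states only locally.
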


	\begin{proof}
		Let $q\in H^0(M,\Sym^2\varOmega M)$ and let $r$ be its generic rank.  On the dense open rank-$r$ locus, view $q$ as a symmetric linear map
		$T^{1,0}M\to T^{*1,0}M$.  Over $\mathbb C$, a symmetric bilinear form is diagonalizable by congruence, so locally
		\[
		q=k_1\alpha_1^{\odot2}+\cdots+k_r\alpha_r^{\odot2},
		\qquad k_i\ne0.
		\]
		Define $Q\in H^0\!\left(M,\Sym^2(\varOmega^rM)\right)$ by taking the top nonzero compound, locally
		\[
		Q=(k_1\cdots k_r)
		(\alpha_1\wedge\cdots\wedge\alpha_r)^{\odot2}.
		\]
		Thus $Q$ is a pointwise pure square of a decomposable $r$-form.  By
		Theorem~\ref{thm:plucker_vanishing}, $Q=0$, contradicting the definition of $r$ unless $r=0$.  Hence $q=0$.
		
		The torsion $T_\nabla$ is holomorphic, so $u,v\mapsto\tr(L_uL_v)$ is a holomorphic covariant $2$-tensor.  Cyclicity of trace gives
		$\tr(L_uL_v)=\tr(L_vL_u)$, hence it lies in
		$H^0(M,\Sym^2\varOmega M)$ and must vanish.
        
	\end{proof}

\section{Solvability and Nilpotency of Chern torsion algebra}
At each point $x\in M$, we now obtain a finite-dimensional complex Lie algebra $\mathfrak{A}_{x,\nabla}$. By Corollary~\ref{cor:sym2-vanish} the Killing form $\kappa_\nabla(u,v)=\tr(\ad_u\ad_v)$ vanishes and then $\mathfrak{A}_{x,\nabla}$ is solvable by Cartan's criterion. By Lie's theorem we can simultaneously upper triangularize the adjoint representation. The diagonal elements of the adjoint representation are characters of this Lie algebra (with multiplicity):
\[
    \lambda_1,\ldots,\lambda_n \in \mathfrak{A}_{x,\nabla}^*.
\]

\begin{proposition}\label{prop:vanish_characters}
    If $M$ is a compact connected CKL manifold with strongly quasi-positive holomorphic sectional curvature (or real bisectional curvature), then every adjoint character $\lambda_i$ vanishes on $M$, i.e. $\ad_u$ is nilpotent for any $u\in\mathfrak{A}_{x,\nabla}$ at each $x\in M$.
\end{proposition}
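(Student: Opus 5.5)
The plan is to package the adjoint characters as holomorphic objects and then kill them with the Bochner--Stokes argument from the proof of Theorem~\ref{thm:plucker_vanishing}, taking $p=1$. Since $T_\nabla$ is holomorphic (Proposition~\ref{prop:basic_identities}), the characteristic polynomial
\[
    \det\bigl(t-\ad_u\bigr)=t^n+\sum_{k=1}^n(-1)^kc_k(u)\,t^{n-k}
\]
has coefficients $c_k\in H^0(M,\Sym^k\varOmega M)$. By Lie's theorem, at each point $x$ this polynomial factors as $\prod_i\bigl(t-\lambda_i(u)\bigr)$ with $\lambda_i\in\mathfrak A_{x,\nabla}^*$. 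Each $c_k$ is therefore pointwise a product of linear forms in $u$.

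The statement is equivalent to $c_k\equiv0$ for all $k\ge1$. Let $r$ be the largest $k$ with $c_k\not\equiv0$, and suppose $r\ge1$. On a dense open set $U$ where $c_r\neq0$, the section $c_r$ is pointwise the product $\ell_1\cdots\ell_r$ of the nonzero characters.

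Next I split these linear factors into single-valued holomorphic $1$-forms on a cover. I repeat Step~2 of the proof of Theorem~\ref{thm:plucker_vanishing}, with the variety of pure powers replaced by the variety of completely decomposable elements of $\Sym^r(\varOmega M)$. This variety is the image of the closed multiplication map $\Sym^r\mathbb P(\varOmega M)\to\mathbb P(\Sym^r\varOmega M)$.

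The map $x\mapsto\{[\ell_1],\dots,[\ell_r]\}$ is meromorphic. Choosing an ordering of the factors is a finite (branched) choice, so after normalization and resolution I expect to obtain a compact connected complex manifold $Y$, a generically finite surjection $F:Y\to M$, and holomorphic sections $\widetilde\ell_1,\dots,\widetilde\ell_r$ of $F^*\varOmega M$ with
\[
    \widetilde\ell_1\odot\cdots\odot\widetilde\ell_r=F^*c_r .
\]
To fix the scalings of the individual factors holomorphically, I will use root-of-section tricks as in Step~2: take $L_i$ the pulled-back tautological line and a component of $\{t^m=s\}$. Each $\widetilde\ell_i$ is nonzero on a dense open set.

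For a fixed factor $\eta:=\widetilde\ell_1$, I then run Steps~3--5 verbatim with $p=1$. The ingredients are as follows.
\begin{itemize}
\item Set $\Omega=F^*\omega$, $f=|\eta|^2_{F^*h}$ and $\Psi=\sqrt{-1}\,\eta\wedge\bar\eta\wedge\Omega^{n-2}/(n-2)!$.
\item Balancedness gives $\dif\Omega^{n-1}=0$.
\item Lemma~\ref{prop:decomp-square} makes $S_\eta\ge0$, and \eqref{eq:bochner-beta} together with nonnegative RBC makes $B_\eta\ge0$.
\end{itemize}
The Stokes identity \eqref{eq:stokes-master} forces $f$ to be constant and $B_\eta\equiv0$. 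Strict RBC positivity at the strong point then contradicts $\eta\neq0$.

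Only the holomorphy of $\eta$ as a section of $F^*\varOmega M$ was used in those steps, and not that $\eta$ descends to $M$. So the argument goes through on $Y$, giving $\eta\equiv0$. This contradicts $c_r\not\equiv0$, so $r=0$ and every $c_k$ vanishes. Hence $\ad_u$ is nilpotent at every point. The HSC hypothesis is reduced to RBC by Proposition~\ref{prop:equiv_HSC_RBC}.

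The main obstacle is the construction in the second paragraph. We need the factors to be holomorphic sections on a compact smooth model, and not merely locally defined branches, with the closure of the graph analytic. This is where I would spend care.

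If that construction proves awkward, I would first try a shortcut that reduces to the statement of Theorem~\ref{thm:plucker_vanishing} itself. Namely, show that a generic $\ad$-character is repeated, so that $c_r$ is locally a pure power $\ell^{\odot r}$. This holds, for instance, if all nonzero characters coincide; one could try to prove that using the Jacobi identity, the trace-free condition $\sum_a T^a_{aj}=0$, and $\kappa_\nabla=0$.
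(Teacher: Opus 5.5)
Your overall plan has a genuine gap at the step you flagged as the main obstacle. The plan itself is a legitimate alternative: make one nonzero character a single-valued holomorphic section on a generically finite cover $F:Y\to M$, then rerun Steps~3--5 of Theorem~\ref{thm:plucker_vanishing} with $p=1$. But the mechanism you propose for that step does not apply. On an ordered cover with line subbundles $L_1,\dots,L_r\subset F^*\varOmega M$ (pullbacks of $\mathcal{O}_{\mathbb{P}(\varOmega M)}(-1)$), the section $F^*c_r$ lies in $L_1\otimes\cdots\otimes L_r$, not in any $L_i^{\otimes m}$. So there is no equation $t^m=s$ whose root would produce $\widetilde\ell_i$. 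In fact $c_r$ alone cannot fix the individual scalings, since $\ell_1\odot\ell_2=(a\ell_1)\odot(a^{-1}\ell_2)$. Nothing in your argument prevents the $L_i$ from having no nonzero holomorphic sections at all.

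The missing idea is that the characters are canonically normalized by the whole characteristic polynomial, not by $c_r$. Consider
\[
  \mathcal{Z}=\bigl\{(x,\lambda)\in\varOmega M:\ \det\bigl(\lambda(u)-\ad_u\bigr)=0\ \text{for all } u\in T^{1,0}_xM\bigr\}.
\]
\begin{itemize}
\item $\mathcal{Z}$ is a closed analytic subset of the total space of $\varOmega M$, because the coefficients in $u$ are holomorphic in $(x,\lambda)$.
\item Its fibre over $x$ is exactly the set of adjoint characters at $x$, since $\prod_i(\lambda(u)-\lambda_i(u))\equiv0$ in the domain $\mathbb{C}[u]$ forces $\lambda=\lambda_i$ for some $i$. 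Hence $\mathcal{Z}\to M$ is finite and proper.
\item If some character is nonzero somewhere, then some $c_k\not\equiv0$. An irreducible component $\mathcal{Z}_1$ not contained in the zero section then maps onto $M$.
\item Normalizing and resolving $\mathcal{Z}_1$ as in Step~2 gives $F:Y\to M$. The tautological section gives a generically nonzero $\eta\in H^0(Y,F^*\varOmega M)$.
\item Then $dF^*\eta\in H^0(Y,\varOmega^1Y)$ is what Steps~3--5 consume. You need the form on $Y$ to build $\Psi$ and $\partial\eta-\mathcal{T}\eta$.
\end{itemize}
This uses a single character, with no ordering and no roots. Also, for $p=1$ the curvature term in \eqref{eq:bochner-beta} is $|\eta|^4$ times the holomorphic sectional curvature in the direction dual to $\eta$. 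So this route needs only HSC, without Proposition~\ref{prop:equiv_HSC_RBC}.

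Your fallback shortcut cannot work as stated. The Jacobi identity, $\sum_aT^a_{aj}=0$ (i.e.\ $\sum_i\lambda_i=0$), and $\kappa_\nabla=\sum_i\lambda_i\otimes\lambda_i=0$ do not force the nonzero characters to be proportional. Let an abelian $\mathbb{C}^2$ act diagonally on an abelian $\mathbb{C}^8$ with weights $\pm\epsilon_1,\pm\sqrt{-1}\,\epsilon_1,\pm\epsilon_2,\pm\sqrt{-1}\,\epsilon_2$. The semidirect product satisfies all three conditions. Yet its top nonzero coefficient is a nonzero multiple of $\epsilon_1^{\odot4}\odot\epsilon_2^{\odot4}$, which is not a pure power of a linear form.

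This higher-rank situation is exactly what the paper's proof handles without ever splitting characters:
\begin{itemize}
\item It uses the global holomorphic power-sum tensors $P_{2k}=\sum_i\lambda_i^{\odot2k}$ and forms $W_{R,k}=\frac{1}{R!}\mathcal{A}_{R,2k}(P_{2k}^{\odot R})$, where $R$ is the maximal dimension of $\Span\{\lambda_i\}$.
\item All $R$-fold wedges of characters lie in the line $\Lambda^RL_\lambda(x)$, so $W_{R,k}$ is pointwise a pure $2k$-th power of a decomposable $R$-form.
\item Newton's identities give some $k_0$ with $W_{R,k_0}\not\equiv0$, and Theorem~\ref{thm:plucker_vanishing} is applied as a black box.
\end{itemize}
The paper thus needs no new cover construction, whereas your route (once repaired) needs its own spectral cover but only the $p=1$ case.

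A minor point: your sentence that each $c_k$ is pointwise a product of linear forms is true only for the top coefficient $c_r$.
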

\begin{proof}

For each $u\in\mathfrak{A}_{x,\nabla}$, the adjoint representation
\[
    \ad: \mathfrak{A}_{x,\nabla}\to \End(\mathfrak{A}_{x,\nabla}),\quad \ad_u(v):=[u,v]_\nabla
\]
is holomorphically dependent on $x$. Since $\mathfrak{A}_{x,\nabla}$ is solvable, then we define
\[
    P_s(u_1,\ldots,u_s) = \frac{1}{s!}\sum_{\sigma\in S_s} \tr(\ad_{u_{\sigma(1)}}\cdots\ad_{u_\sigma(s)}).
\]
Since $\ad_u$ is holomorphic for each $u$, then $P_s$ is a holomorphic symmetric $s$-tensor.

Fix some $x\in M$. By Lie's Theorem, we can choose some basis of $\mathfrak{A}_{x,\nabla}$ to simultaneously upper-triangularize every $\ad_u$. Suppose the diagonal elements are
\[
    \lambda_1,\ldots,\lambda_n \in \mathfrak{A}_{x,\nabla}^*.
\]
These are characters of adjoint representations. Then
\[
    P_s(u_1,\ldots,u_s)_x = \sum_{\nu=1}^n\lambda_\nu(u_1)\cdots\lambda_\nu(u_s).
\]
For $r,k\ge 1$, we define
\[
    W_{r,k} := \sum_{\substack{I\subset\{1,\ldots,n\}\\|I|=r}}\left(\bigwedge_{i\in I}\lambda_i\right)^{\odot 2k}.
\]
Although each $\lambda_i$ cannot be chosen holomorphically, we show that $W_{r,k}$ can be extended as a globally holomorphic tensor. 

\textbf{Step 1.} $W_{r,k}$ can be extended as a globally holomorphic tensor.

For any $r,q$, we define
\[
     \mathcal{A}_{r,q}:(\Sym^q \varOmega M)^{\otimes r}\to \Sym^q(\varOmega^rM),\quad \xi_1^{\odot q} \otimes\cdots\otimes \xi_r^{\odot q}\mapsto (\xi_1\wedge\cdots\wedge\xi_r)^{\odot q},
\]
which is a holomorphic vector bundle morphism.

For each point $x$, we have
\[
    P_{2k}|_x=\sum_{i=1}^n \lambda_i(x)^{\odot 2k}.
\]
Thus,
\[
    \mathcal{A}_{r,2k}(P_{2k}^{\odot r})|_x = \sum_{a_1,\ldots,a_r} (\lambda_{a_1}\wedge\cdots \wedge\lambda_{a_r})^{\odot 2k}.
\]
Since any permutation on $a_1,\ldots,a_r$ does not change the summation, we have
\[
    \mathcal{A}_{r,2k}(P_{2k}^{\odot r})|_x = r! \sum_{\substack{I\subset\{1,\ldots,n\}\\|I|=r}}\left(\bigwedge_{i\in I}\lambda_i(x)\right)^{\odot 2k}.
\]
Thus, globally we have
\[
W_{r,k}=\frac{1}{r!}\mathcal{A}_{r,2k}(P_{2k}^{\odot r}),
\]
which is a globally holomorphic tensor.

Set 
    \[
        L_\lambda(x)=\Span_{\mathbb{C}} \{\lambda_i(x):1\le i\le n\},\quad R := \max_M\dim_{\mathbb{C}} L_\lambda(x).
    \]
\textbf{Step 2.} Suppose that $R>0$ and we prove that there is some $k_0$ such that $W_{R,k_0}$ is not always zero and is a symmetric power of some decomposable form on $U$.

Take some point $x_0$ such that $\dim_\mathbb{C}L_\lambda(x_0)=R$. And choose a nonzero $\eta_{x_0}\in\Lambda^RL_\lambda(x_0)$, then for any $R$-element set $I$ there is some $c_I\in\mathbb{C}$ such that
\[
    \bigwedge_{i\in I}\lambda_i(x_0) = c_I\eta_{x_0},
\]
where at least one $c_I\ne 0$. Let $d_I=c_I^2$ and $N=\binom{n}{R}$. Since elementary symmetric polynomials of $d_I$'s cannot be all zeros, then power sum of $d_I$'s cannot be all zeros, that is there is some $k_0$ such that $W_{R,k_0}(x_0)\ne 0$. 
Let
\[
    U=\{x\in M : W_{R,k_0}\ne 0\}.
\]
Since $W_{R,k_0}$ is a holomorphic tensor, then $U$ is dense open. And pointwisely $W_{R,k_0}$ is a symmetric power of decomposable $\eta_{x_0}$.

From \textbf{Step 1 and 2} we prove that $W_{R,k_0}$ is a symmetric power of some pointwise decomposable form on $U$. And it is not always zero on $M$.
 
But by Proposition~\ref{prop:equiv_HSC_RBC} and Theorem~\ref{thm:plucker_vanishing}, $W_{R,k_0}\equiv0$. Thus, $R=0$ for any points and all $\lambda_i$ vanish.
\end{proof}

\begin{remark}
    In fact, $W_{R,k_0}$ is a symmetric power of decomposable $\eta$ on some dense open subset of $M$. Define a holomorphic map 
\[
    \Phi_W:U\to \mathbb{P}(\Sym^{2k_0}(\varOmega^RM))\quad x\mapsto [W(x)].
\]
Let
\[
    \mathrm{Pl}:\mathrm{Gr}(R,\varOmega M)\to\mathbb{P}(\varOmega^R M) 
\]
and 
\[
    \nu_{2k_0}: \mathbb{P}(\varOmega^R M)\to \mathbb{P}(\Sym^{2k_0}(\varOmega^R M)),\quad [\eta]\mapsto [\eta^{\odot 2k_0}]
\]
be Pl\"{u}cker and Veronese embeddings. Set $\Psi = \nu_{2k_0} \circ\mathrm{Pl}$ which is a closed holomorphic embedding. Since pointwisely $W_{R,k_0}(x)$ is a symmetric power of some decomposable wedge product, then $[W_{R,k_0}(x)]$ is in the image of $\Psi$. On its image, $\Psi^{-1}$ is also holomorphic. Thus, 
\[
    x\mapsto \Psi^{-1}([W_{R,k_0}(x)]) 
\]
is a holomorphic map to Grassmannian bundle. It determines a holomorphic map
\[
    x\mapsto [\eta_x]\in \mathbb{P}(\varOmega^R M) 
\]
again by Pl\"{u}cker embedding. This map gives a holomorphic line bundle
\[
    \mathcal{L}\subset \varOmega^R M|_U.
\]
For any $x\in U$, choose a small enough simply connected coordinate chart $V\subset U$ such that $\mathcal{L}|_V$ is trivial. Choose a holomorphic frame $\theta$ of $\mathcal{L}|_V$. Since each fiber of $\mathcal{L}$ is a Pl\"{u}cker line, $\theta$ is decomposable. Since $W$ and $\theta^{\odot 2k_0}$ generate the same line pointwisely, then there is a unique holomorphic function $f$ such that $W_{R,k_0}=f\theta^{\odot 2k_0}$ on $V$. Simply connectedness tells that there is a holomorphic $g$ on $V$ such that $g^{2k_0}=f$. Let $\eta_V=g\theta$ which is decomposable and its $2k_0$-th symmetric power is $W_{R,k_0}$. We call such $V$ a root neighbourhood of $W_{R,k_0}$. 

Let $\{V_\alpha\}$ be a maximal collection within those collections of root neighbourhood where any two root neighbourhoods do not intersect. Let
\[
    U'=\bigcup_{\alpha} V_\alpha.   
\]
It is open and we prove it is dense in $U$ then it is dense in $M$. If $U\backslash \overline{U'}$ is a nonempty open set, then we can find another root neighbourhood outside this collection and it does not intersect with any other root neighbourhood in the collection. This contradicts the maximality of the collection. 
\end{remark}

By Engel's Theorem of complex Lie algebra, we have
\begin{corollary}
    The Chern torsion algebra $\mathfrak{A}_{x,\nabla}$ under assumptions of Proposition~\ref{prop:vanish_characters} is nilpotent at each $x\in M$.
\end{corollary}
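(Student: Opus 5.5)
The corollary follows from Engel's theorem once we know that every $\ad_u$ is nilpotent. Fix $x\in M$. By the theorem of Section~2, $\mathfrak A_{x,\nabla}=(T^{1,0}_xM,[\cdot,\cdot]_\nabla)$ is a finite-dimensional complex Lie algebra, since the bracket $T_\nabla$ is bilinear, skew-symmetric and satisfies the Jacobi identity. Engel's theorem then says that $\mathfrak A_{x,\nabla}$ is nilpotent exactly when $\ad_u$ is a nilpotent endomorphism of $\mathfrak A_{x,\nabla}$ for every $u$.

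So the work is to check that this hypothesis holds, which is what Proposition~\ref{prop:vanish_characters} provides. Under the hypotheses of the corollary, Corollary~\ref{cor:sym2-vanish} (via Proposition~\ref{prop:equiv_HSC_RBC} if only HSC is assumed) gives vanishing of the Killing form. Cartan's criterion then makes $\mathfrak A_{x,\nabla}$ solvable. Lie's theorem upper-triangularizes all $\ad_u$ simultaneously, with diagonal characters $\lambda_1,\dots,\lambda_n$. Proposition~\ref{prop:vanish_characters} shows that all $\lambda_\nu\equiv0$. Hence each $\ad_u$ is strictly upper triangular in that basis, and therefore nilpotent.

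Alternatively, one can avoid invoking Engel by a direct argument. In the Lie flag basis $v_1,\dots,v_n$ with $\ad_u(v_j)\in\Span\{v_1,\dots,v_j\}$ and zero diagonal, we have $[\mathfrak A,V_j]\subset V_{j-1}$, where $V_j=\Span\{v_1,\dots,v_j\}$. Then the lower central series satisfies $C^k\mathfrak A\subset V_{n-k}$, so $C^n\mathfrak A=0$, which is nilpotency.

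There is no real obstacle here. The only point to be careful about is that Lie's theorem needs the field to be algebraically closed of characteristic zero, which holds since $\mathfrak A_{x,\nabla}$ is a complex Lie algebra. Nilpotency is then pointwise at each $x$, as stated.
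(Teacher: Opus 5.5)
Your proposal is correct and follows the paper's route: the paper deduces the corollary in one line from Engel's theorem, using that Proposition~\ref{prop:vanish_characters} makes every $\ad_u$ nilpotent, and that is exactly your main argument. Your alternative flag argument is Engel's conclusion written out directly on the Lie-theorem flag of ideals, and it is also fine; your indexing $C^k\mathfrak A\subset V_{n-k}$ assumes the convention $C^0\mathfrak A=\mathfrak A$.
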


\subsection*{Proof of Theorem~\ref{mainthm:CKL-Kahler}} 
When dimension $n=1$, every Hermitian metric is K\"{a}hler. When $n=2$ the balancedness gives $\dif \omega = 0$. Assume that $n\ge 3$.

Suppose that $x_0$ is a point where the holomorphic sectional curvature is positive. And then within a neighbourhood $U$ of $x_0$ the holomorphic sectional curvature is positive.

Suppose that $T_\nabla$ is nonzero at some point $x\in U$.
Since $\mathfrak{A}:=\mathfrak{A}_{x,\nabla}$ is a nonabelian nilpotent Lie algebra, then the lower central series
\[
    \mathfrak{A}=\mathfrak{A}_1 \supset \mathfrak{A}_2 \supset \cdots \supset \mathfrak{A}_s \supset \mathfrak{A}_{s+1} =0 , \quad \mathfrak{A}_{j+1} = [ \mathfrak{A},\mathfrak{A}_j]_\nabla,\quad  s\geq 2,
\]
is finite. Choose $u \in \mathfrak{A}, v\in \mathfrak{A}_{s-1}$ with $w=[u,v]_\nabla\ne 0$. Then $w$ is in the center of $\mathfrak{A}$ and
\[
    \mathfrak{B}_\nabla(u\wedge v\wedge w) = [u,v]_\nabla\odot w + [v,w]_\nabla\odot u + [w,u]_\nabla\odot v = w \odot w.
\]
Since $\hat{R}_\nabla\circ\mathfrak{B}_\nabla=0$ \eqref{eq:bianchi_map}, then
\[
   \HSC(w/|w|)=\frac{1}{|w|^4} R_\nabla(w,\bar{w},w,\bar{w})=\frac{1}{|w|^4}h(\hat{R}_\nabla(w\odot w),(w\odot w)) = 0,
\]
which is a contradiction. So $T_\nabla\equiv 0$ on $U$ and it is holomorphic by Proposition~\ref{prop:basic_identities}. Thus, it vanishes identically on $M$ which is equivalent to $h$ being K\"{a}hler.

\subsection*{Proof of Corollary~\ref{cor:vanish_pq}}

By \cite{ZhangZhang2023}, $M$ is rationally connected and projective. Then using Theorem 1.7 and Remark 1.6 in \cite{LiZhangZhang2024} we know that $(T^{1,0}M, \bar\partial)$ is HN-positive in their notation, and their Corollary 4.4 directly states that the HN-positiveness of $(T^{1,0}M, \bar\partial)$ implies that 
\[
    H^0\!\left(M, (T^{1,0}M)^{\otimes q}\otimes(T^{*1,0}M)^{\otimes p}\right)=0,
\] 
when $p\ge 1, q\ge 0$ and $p\ge Cq$ for some constant $C$ which depends only on $M$, its tangent and cotangent bundle.

\section{A counterexample of Yang-Zheng's Conjecture}
In this section we prove Theorem~\ref{mainthm:counterexample_YZ}.

Consider a compact annulus 
\[
    K=\{z\in\mathbb{C}^2:q\le |z|\le 1\},
\]
and each orbit intersects with $K$ which means the quotient map $\pi$ is surjective on $K$. Thus, $H_q$ is compact.

Define
\[
    A_\epsilon(z) = I + \frac{\epsilon}{|z|^2}\begin{pmatrix} -z_1z_2 & z_1^2 \\ -z_2^2 & z_1z_2 \end{pmatrix}.
\]
And we define a Hermitian metric on $\mathbb{C}^2\backslash\{0\}$,
\[
    h_{\epsilon,z}(\xi,\xi) = \frac{1}{|z|^2}|A_\epsilon(z)\xi|^2_{\text{Euclidean}},\quad \xi\in T^{1,0}_z(\mathbb{C}^2\backslash\{0\})\cong \mathbb{C}^2.
\]
Since
\[
    h_{\epsilon,qz}(q\xi,q\xi) = \frac{1}{|qz|^2}|A_\epsilon(qz)(q\xi)|^2_{\text{Euclidean}} =h_{\epsilon,z}(\xi,\xi),
\]
then this Hermitian metric is well-defined on $H_q$ and we still denote the metric on $H_q$ by $h_\epsilon$. 

For a fixed unitary frame $e$ at one point $p$, RBC-positivity is equivalent to
\[
    Q_p(B):=\sum_{i,j,k,\ell=1}^2 R_{i\bar{j}k\bar{\ell}} B_{ij}B_{k\ell}>0
\]
for every positive semi-definite Hermitian matrices $B$, see \cite[Formula (1)]{YangZheng2019}. Moreover, both $h_{\epsilon,z}$ and $Q_p(B)$ are invariant under $\SU(2)\times \mathbb{R}_{>0}$ action which is transitive, that is for $g\in\SU(2)\times \mathbb{R}_{>0}$ the holomorphic isometry $\Phi_g$ sends $p$ to $p'$ and $U=\dif\Phi_{g,p}$, then
\[
    Q_p(B) = Q_{p'}(UBU^*).
\]
Thus we only need to prove the RBC-positivity at point $p=(1,0)$.

We take a unitary frame at $p=(1,0)$,
\[
    e_1 = \frac{\partial}{\partial z_1},\quad e_2 = -\epsilon\frac{\partial}{\partial z_1}+\frac{\partial}{\partial z_2}.
\]
A direct calculation gives the following curvature components:
\[
    \begin{aligned}
        R_{1\bar{1}1\bar{1}}=-R_{1\bar{1}2\bar{2}}=-R_{1\bar{2}1\bar{2}}&=-R_{1\bar{2}2\bar{1}}=-R_{2\bar{1}1\bar{2}}=-R_{2\bar{1}2\bar{1}}=\epsilon^2, \\
        R_{1\bar{1}1\bar{2}}&=R_{1\bar{1}2\bar{1}}=\epsilon, \\
        R_{1\bar{2}1\bar{1}}=-R_{1\bar{2}2\bar{2}}=R_{2\bar{1}1\bar{1}}&=-R_{2\bar{1}2\bar{2}}=-R_{2\bar{2}1\bar{2}}=-R_{2\bar{2}2\bar{1}}=-\epsilon(1+\epsilon^2), \\
        R_{2\bar{2}1\bar{1}}&=(1+\epsilon^2)^2,\\
        R_{2\bar{2}2\bar{2}}&=1-2\epsilon^2-\epsilon^4.
    \end{aligned}
\]
For any nonzero positive semi-definite Hermitian matrix
\[
    B=\begin{pmatrix} x & a+\sqrt{-1}b \\ a-\sqrt{-1}b & y \end{pmatrix},\quad x\ge 0,\quad y\ge 0,\quad a^2+b^2\leq xy,
\]
we have
\[
    \begin{aligned}
        Q_p(B) =& \epsilon^2 x^2 + (1+\epsilon^2+\epsilon^4)xy+(1-2\epsilon^2-\epsilon^4)y^2-4\epsilon^2a^2 \\ & + 2\epsilon a(2(1+\epsilon^2)y-\epsilon^2x).
    \end{aligned} 
\]
By Young's inequality and $|a|\le \sqrt{xy}$, the last term 
\[
    \begin{aligned}
        2\epsilon a(2(1+\epsilon^2)y-\epsilon^2x) &\ge  -4|\epsilon|(1+\epsilon^2)x^{1/2}y^{3/2} - 2 |\epsilon|^3x^{3/2}y^{1/2}\\ &\ge -\frac{1}{2}\epsilon^2x^2-2\epsilon^4 xy-\frac{1}{2}xy-8\epsilon^2(1+\epsilon^2)^2y^2.
    \end{aligned}
\]
And again by $|a|\le \sqrt{xy}$, we have
\begin{equation}\label{eq:RBC_lower_bound}
        Q_p(B)\ge \frac{1}{2}\epsilon^2x^2 + \left(\frac{1}{2}-3\epsilon^2-\epsilon^4\right)xy+(1-10\epsilon^2-17\epsilon^4-8\epsilon^6)y^2.
\end{equation}
When $0< |\epsilon| \le 1/4$, we have
\[
    Q_p(B) >0,
\]
since $(x,y)\ne (0,0)$. Thus, $h_\epsilon$ is a Hermitian metric with positive RBC on $H_q$. 

Since $\mathbb{C}^2\backslash\{0\}$ deformation retracts to $S^3$ and the deck transformation group of $H_q$ is isomorphic to $\mathbb{Z}$, then $\pi_1(H_q)\cong \mathbb{Z}$.
And $H_q$ is homeomorphic to $S^3\times S^1$ whose $b_1(H_q)=1$, thus $H_q$ cannot admit any K\"{a}hler metric. Since every holomorphic map from $\mathbb{CP}^1$ to $\mathbb{C}^2\backslash\{0\}$ is constant, then there is no rational curve in $H_q$. So $H_q$ cannot be rationally connected.

\begin{remark}
    Even if we assume that $\RBC_\nabla>c>0$ (see \cite[Formula (1)]{YangZheng2019}), $H_q$ is still a counterexample. Since we can prove that in \eqref{eq:RBC_lower_bound},
    \[
        \sum_{i,j,k,\ell=1}^2 R_{i\bar{j}k\bar{\ell}} B_{ij}B_{k\ell}\ge \frac{\epsilon^2}{2}(x+y)^2\ge \frac{\epsilon^2}{2}\tr(B^2).
    \]
    Then by scaling the metric, we can construct a counterexample for any $c>0$.
\end{remark}

\printbibliography

\end{document}